\newtheorem{theorem}{Theorem}[section]
\newtheorem{corollary}[theorem]{Corollary}
\newtheorem{proposition}[theorem]{Proposition}
\newtheorem{lemma}[theorem]{Lemma}
\theoremstyle{definition}
\newtheorem{definition}[theorem]{Definition}
\newtheorem{example}[theorem]{Example}
\theoremstyle{remark}
\newtheorem{notation}[theorem]{Notation}
\numberwithin{equation}{section}
\title{Binomial canonical decompositions of binomial ideals}
\author[I. Ojeda]{Ignacio Ojeda}
\address{Universidad de Extremadura, Departamento de Matem\'{a}ticas, Facultad de Ciencias, Avenida de Elvas, s/n. 06071-Badajoz (SPAIN)}
\email{ojedamc@unex.es}
\thanks{The author is partially supported by the project MTM2007-64704, National Plan I+D+I}
\subjclass{Primary 13F20; Secondary 13P99 }
\keywords{Primary decomposition, cellular decomposition, binomial ideal, polynomial ring, index of nilpotency}
\begin{document}

\begin{abstract}
In this paper, we prove that every binomial ideal in a polynomial ring over an algebraically closed field of characteristic zero admits a canonical primary decomposition into binomial ideals. Moreover, we prove that this special decomposition is obtained from a cellular decomposition which is also defined in a canonical way and does not depend on the field.
\end{abstract}

\maketitle

\section*{Introduction}

It is well known that in general an ideal of a commutative Noetherian ring does not have a unique minimal primary decomposition; for example, the ideal $\langle x^2, xy \rangle \subset \mathbbmss{C}[x,y]$ has infinitely many minimal primary decompositions: $\langle x^2, xy \rangle = \langle x \rangle \cap \langle x^2,xy,y^m \rangle,\ m \geq 1.$ However, it is possible to define a primary decomposition with uniqueness property. This primary decomposition is due to V.~Ortiz (\cite{Ortiz}) and is called the canonical decomposition (see Theorem \ref{Th Ortiz}).

On the other hand, if $I$ is a binomial ideal in a polynomial ring $S$ over an algebraically closed field $\mathbbmss{k},$ there exists a primary decomposition of $I$ into binomial ideals, where by binomial ideal we mean an ideal of $S$ generated by polynomials with at most two terms.

However, the primary components in the canonical decomposition of a binomial ideal are not necessarily binomial (see Example \ref{Ejem CDnB1}). So, the initial motivation of this work was to answer the following question: is it possible to define a canonical primary decomposition of a binomial ideal in terms of binomial ideals?

Theorem \ref{Th CBPD} provides an affirmative answer when the characteristic of the field is zero. This result is interesting but not very surprising (see the comment after Corollary \ref{Cor BCD1}). The main result in Section 3 is, in fact, Theorem \ref{Th BCPD2} which states that the binomial canonical decomposition of a binomial ideal is univocally determined by an intermediate and unique decomposition introduced in Section \ref{Sect CCD} that we have called the ``canonical cellular decomposition''.

\medskip
This paper is organized as follows. In Section \ref{Sect CPD} we state without proof the theorem of existence and uniqueness of canonical decompositions (Theorem \ref{Th Ortiz}) and explore some of its consequences, especially interesting is the linear growth property of the canonical decompositions of powers of an ideal in a commutative Noetherian ring (Theorem \ref{Th Main1}). In Section \ref{Sect CCD} we proceed with the study of cellular decompositions of an ideal $I$ in a polynomial ring $S$ (see definitions \ref{Def Cellular} and \ref{Def CD}). Cellular decompositions were first introduced by D.~Eisenbud and B.~Sturmfels in \cite{Eisenbud96} as a tool for computing the associated primes and also the primary components of a binomial ideal.  The advantage of using these decompositions lies in the facts that they always exist, do not depend on the field and can be computed efficiently (e.g. by adapting \cite[Algorithm 2]{OjPie}). So, a natural question arises: is there a Ortiz-type theorem for cellular decompositions? The affirmative answer is given by Theorem \ref{Th CCD}, in fact, we prove that the canonical cellular decomposition is the canonical (primary) one if, and only if, every cellular canonical component is primary (Theorem \ref{Prop CCD=CD}). Finally in this section, we prove that, if the characteristic of the field is zero, the canonical cellular components of a binomial ideal are binomial (Theorem \ref{Th BCCD}). In Section \ref{Sect BCD}, the main results on the binomial canonical decomposition mentioned above are stated and proved. Finally, in Section \ref{Sect Examples}, some relevant examples of canonical decompositions are shown.

\medskip
It is worth to pointing out that the study of primary decomposition of binomial ideals has recently attracted the attention of many researchers (see, e.g. \cite{DMM, Kahle}), motivated in part by the use of primary decomposition in the context of the so-called Algebraic Statistics. We hope that this work may stimulate the use of the primary decomposition in this and other research areas.

\section{Canonical primary decomposition}\label{Sect CPD}

Throughout this section $R$ will denote a commutative Noetherian ring.

We begin by recalling the notion index of nilpotency of an ideal of $R$ which will be extensively used in this paper.

\begin{definition}
The index of nilpotency of $I, \text{nil}(I),$ is the smallest integer $e$ such that $$\left( \sqrt{I} \right)^e \subseteq I.$$
\end{definition}

Some authors call $\text{nil}(I)$ the degree of nilpotency or the exponent of $I$ (see, e.g. \cite[Section 9.2]{Vasconcelos}).

The next result due to V. Ortiz \cite{Ortiz} establishes the existence of a canonical primary decomposition of ideals in a commutative Noetherian ring.

\begin{theorem}\label{Th Ortiz}
Every ideal $I$ in $R$ admits a unique minimal primary decomposition: $$I = Q_1^* \cap Q_2^* \cap \ldots \cap Q_t^*,$$ such that if $I = Q_1 \cap Q_2 \cap \ldots \cap Q_t$ is another minimal primary decomposition of $I,$ then
\begin{itemize}
\item[(a)] ${\rm nil}(Q_i^*) \leq {\rm nil}(Q_i),\ i=1, \ldots, t;$
\item[(b)] if ${\rm nil}(Q_i^*) = {\rm nil}(Q_i),$ then $Q_i^* \subseteq Q_i.$
\end{itemize}
\end{theorem}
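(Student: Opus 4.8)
The plan is to follow the classical theory of primary decomposition, combined with a careful study of which $P_i$-primary ideals can occur as components. Write $\mathrm{Ass}(R/I)=\{P_1,\dots,P_t\}$, which is well defined by the first uniqueness theorem, and for each $i$ let $\mathcal{C}_i$ be the (nonempty) set of $P_i$-primary ideals that arise as the $P_i$-primary component in some minimal primary decomposition of $I$. The goal is, for each $i$, to isolate a distinguished $Q_i^*\in\mathcal{C}_i$ of least possible index of nilpotency which is contained in every other member of $\mathcal{C}_i$ of that same index, and then to show that $I=\bigcap_i Q_i^*$ is a minimal primary decomposition; once this is done, (a), (b) and the uniqueness assertion become formal.

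The technical heart is an \emph{exchange lemma}: if $I=\bar Q_1\cap\dots\cap\bar Q_t$ is any minimal primary decomposition with $\sqrt{\bar Q_j}=P_j$ and $Q\in\mathcal{C}_i$, then $I=Q\cap\bigcap_{j\ne i}\bar Q_j$ is again a minimal primary decomposition. The only nontrivial inclusion is $Q\cap\bigcap_{j\ne i}\bar Q_j\subseteq I$; for $x$ in the left-hand side one computes $I:x$ in two ways. From $I=\bar Q_i\cap\bigcap_{j\ne i}\bar Q_j$ and $x\in\bigcap_{j\ne i}\bar Q_j$ one gets $I:x=\bar Q_i:x$, which is either $R$ or $P_i$-primary. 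From a decomposition $I=Q\cap\bigcap_{j\ne i}Q_j'$ witnessing $Q\in\mathcal{C}_i$, together with $x\in Q$, one gets $I:x=\bigcap_{j\ne i}(Q_j':x)$, an ideal whose associated primes all lie in $\{P_j:j\ne i\}$. Since $P_i$ differs from every $P_j$ with $j\ne i$, the first description of $I:x$ is incompatible with the second unless $x\in\bar Q_i$; and then $x\in\bar Q_i\cap\bigcap_{j\ne i}\bar Q_j=I$. Minimality is automatic at each step, since omitting a component would remove an associated prime, contradicting the first uniqueness theorem. Iterating the exchange lemma one index at a time shows that any transversal $(Q_i)$ with $Q_i\in\mathcal{C}_i$ yields a minimal primary decomposition $I=\bigcap_i Q_i$; and from $\bigl(\bigcap_{\alpha}Q_\alpha\bigr)\cap\bigcap_{j\ne i}\bar Q_j=\bigcap_\alpha\bigl(Q_\alpha\cap\bigcap_{j\ne i}\bar Q_j\bigr)$ together with the exchange lemma one sees that whenever $\{Q_\alpha\}\subseteq\mathcal{C}_i$ has intersection still $P_i$-primary, that intersection again lies in $\mathcal{C}_i$.

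Now set $e_i=\min\{\mathrm{nil}(Q):Q\in\mathcal{C}_i\}$ (a minimum of a nonempty set of positive integers), $\mathcal{C}_i^*=\{Q\in\mathcal{C}_i:\mathrm{nil}(Q)=e_i\}$, and $Q_i^*=\bigcap_{Q\in\mathcal{C}_i^*}Q$. Every $Q\in\mathcal{C}_i^*$ satisfies $P_i^{e_i}\subseteq Q\subseteq P_i$, so $\sqrt{Q_i^*}=P_i$; being an intersection of $P_i$-primary ideals with radical $P_i$, $Q_i^*$ is $P_i$-primary; and $\mathrm{nil}(Q_i^*)=e_i$, since $Q_i^*\supseteq P_i^{e_i}$ while $P_i^{e_i-1}\not\subseteq Q$ for each $Q$ in the intersection. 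By the closure property noted above, $Q_i^*\in\mathcal{C}_i$, hence $Q_i^*\in\mathcal{C}_i^*$, and it is by construction the $\subseteq$-smallest element of $\mathcal{C}_i^*$. The iterated exchange lemma then gives the minimal primary decomposition $I=\bigcap_i Q_i^*$, which we take as the canonical one.

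Finally, let $I=\bigcap_i Q_i$ be any minimal primary decomposition, indexed so that $\sqrt{Q_i}=P_i$. Then $Q_i\in\mathcal{C}_i$, so $\mathrm{nil}(Q_i)\ge e_i=\mathrm{nil}(Q_i^*)$, which is (a); and if $\mathrm{nil}(Q_i)=e_i$ then $Q_i\in\mathcal{C}_i^*$, so $Q_i^*\subseteq Q_i$, which is (b). For uniqueness, if $I=\bigcap_i\widetilde{Q}_i$ is a minimal primary decomposition enjoying (a) and (b), then $\widetilde{Q}_i\in\mathcal{C}_i$ gives $\mathrm{nil}(\widetilde{Q}_i)\ge e_i$, while (a) applied with $(Q_i)=(Q_i^*)$ gives $\mathrm{nil}(\widetilde{Q}_i)\le e_i$; equality then lets (b) be invoked in both directions between $(\widetilde{Q}_i)$ and $(Q_i^*)$, yielding $\widetilde{Q}_i=Q_i^*$ for all $i$. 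The main obstacle is the exchange lemma together with its consequence that intersections of admissible components remain admissible; once these are in place, the manipulation of indices of nilpotency is routine.
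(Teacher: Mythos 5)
Your proof is correct, and it is essentially the classical Ortiz--Swanson argument that the paper itself only cites (compatibility/exchange of primary components via the two computations of $I:x$, closure of the set of admissible $P_i$-components under intersection, and then taking the intersection of all components of minimal index of nilpotency). No gaps: the exchange lemma, the closure property, and the verification that $Q_i^*$ is $P_i$-primary with $\mathrm{nil}(Q_i^*)=e_i$ all check out.
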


\begin{proof}
For a proof see \cite{Ortiz} or \cite[Theorem 6.2]{Swanson07}.
\end{proof}

The primary ideals $Q^*_i,\ i = 1, \ldots t,$ are called canonical components of $I$ and $Q_1^* \cap Q_2^* \cap \ldots \cap Q_t^*$ is called the canonical decomposition of $I.$

\medskip
As immediate consequences we have the following:

\begin{corollary}\label{Cor Pre0}
If $Q^*$ is the $P-$canonical component of an ideal $I$ of $R,$ then $Q^*$ is equal to the $P-$primary component of $I+P^{\mathrm{nil}(Q^*)}.$
\end{corollary}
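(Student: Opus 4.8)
The plan is to fix the canonical decomposition $I = Q_1^* \cap \cdots \cap Q_t^*$, to write $Q^* = Q_i^*$, $P = \sqrt{Q^*}$, $e = \mathrm{nil}(Q^*)$ and $J := I + P^{e}$, and to prove that the $P$-primary component $Q$ of $J$ equals $Q^*$ by establishing the two inclusions $Q \subseteq Q^*$ and $Q^* \subseteq Q$. I would begin by observing that ``the $P$-primary component of $J$'' is unambiguous: since $I \subseteq Q^* \subseteq \sqrt{Q^*} = P$, a prime of $R$ contains $J = I + P^{e}$ if and only if it contains $P$, so $P$ is the unique minimal prime of $J$. As an isolated associated prime of $R/J$, $P$ therefore determines a single primary component of $J$, namely the saturation $Q = \{x \in R : wx \in J \text{ for some } w \in R \setminus P\} = JR_P \cap R$. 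The inclusion $Q \subseteq Q^*$ is then immediate: $P^{e} \subseteq Q^*$ by the definition of $\mathrm{nil}(Q^*)$ and $I \subseteq Q^*$, hence $J \subseteq Q^*$, and localizing at $P$ and contracting gives $Q = JR_P \cap R \subseteq Q^* R_P \cap R = Q^*$, the last equality holding because a $P$-primary ideal is contracted from its localization at $P$.

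For the reverse inclusion I would bring in the uniqueness part of Theorem~\ref{Th Ortiz}. Set $B := \bigcap_{j \ne i} Q_j^*$; then $I = J \cap B$: the inclusion $\subseteq$ is clear, and if $y \in J \cap B$ we write $y = a + p$ with $a \in I \subseteq Q^*$ and $p \in P^{e} \subseteq Q^*$, so $y \in Q_i^*$, and since also $y \in B = \bigcap_{j \ne i} Q_j^*$ we get $y \in \bigcap_j Q_j^* = I$. Now take a minimal primary decomposition $J = Q \cap C$, where $C$ is the intersection of the remaining components; their radicals lie in $\mathrm{Ass}(R/J) \setminus \{P\}$ and hence strictly contain $P$. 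Intersecting with the minimal primary decomposition $B = \bigcap_{j\ne i} Q_j^*$ produces a primary decomposition $I = Q \cap C \cap B$ in which $Q$ is the only component whose radical equals $P$. Reducing it to a minimal primary decomposition of $I$ cannot discard $Q$, for otherwise $P$ would not be an associated prime of $R/I$, contradicting $P = P_i$; thus $Q$ is the $P$-primary component of $I$ relative to some minimal primary decomposition. Theorem~\ref{Th Ortiz}(a) then gives $\mathrm{nil}(Q^*) \le \mathrm{nil}(Q)$, while $P^{e} \subseteq J \subseteq Q$ gives $\mathrm{nil}(Q) \le e = \mathrm{nil}(Q^*)$; hence $\mathrm{nil}(Q) = \mathrm{nil}(Q^*)$, and Theorem~\ref{Th Ortiz}(b) yields $Q^* \subseteq Q$. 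Together with the first paragraph this gives $Q = Q^*$.

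The only step I expect to require genuine care is the passage from the ad hoc decomposition $I = Q \cap C \cap B$ to a bona fide minimal primary decomposition of $I$: one must check that this reduction leaves the $P$-primary component unchanged, which rests on the facts that among all the components occurring $Q$ is the unique one with radical $P$ and that $P \in \mathrm{Ass}(R/I)$. This is exactly where the canonicity of $Q^*$, via Theorem~\ref{Th Ortiz}, is used; everything else is bookkeeping.
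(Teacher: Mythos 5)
Your proof is correct and fills in precisely the argument the paper leaves implicit: it states Corollary \ref{Cor Pre0} as an ``immediate consequence'' of Theorem \ref{Th Ortiz}, and the germ of your argument (that the $P$-primary component of $I+P^{e}$ is a $P$-primary component of $I$ with index of nilpotency at most $e$, forcing equality with $Q^*$ by parts (a) and (b) of the theorem) is exactly what appears in the proof of Corollary \ref{Cor Pre1}. Your careful verification that $Q$ survives the reduction to a minimal primary decomposition of $I$ is the right point to check, and it is handled correctly.
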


The above result was already noticed by V.~Ortiz in \cite{Ortiz}.

\begin{corollary}\label{Cor Pre1}
Let $\cap_{i=1}^t Q_i$ be any minimal primary decomposition of an ideal $I$ of $R.$ The index of nilpotency of the $\sqrt{Q_j}-$canonical component of $I$ is the smallest integer $e_j$ such that $$I = \Big(I + \big(\sqrt{Q_j}\big)^{e_j}\Big) \cap \big(\bigcap_{i \neq j} Q_i\big).$$
\end{corollary}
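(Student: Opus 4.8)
Throughout put $P_j=\sqrt{Q_j}$, let $Q_j^*$ denote the $P_j$-canonical component of $I$, set $n_j=\mathrm{nil}(Q_j^*)$ and $K=\bigcap_{i\neq j}Q_i$. Since all minimal primary decompositions of $I$ share the same set of associated primes and the canonical one is minimal, we may write it as $I=\bigcap_{i=1}^tQ_i^*$ with $\sqrt{Q_i^*}=\sqrt{Q_i}=P_i$ for every $i$. Because $I\subseteq(I+P_j^{e})\cap K$ always holds and $I+P_j^{e}$ shrinks as $e$ grows, the set of exponents $e$ for which $I=(I+P_j^{e})\cap K$ is upward closed; so it is enough to prove that $n_j$ lies in this set and that no smaller exponent does. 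The first assertion follows at once from the following \emph{mixing lemma}: one always has $I=Q_j^*\cap K$. Indeed, granting the lemma, $I\subseteq Q_j^*$ together with $P_j^{n_j}=(\sqrt{Q_j^*})^{n_j}\subseteq Q_j^*$ (the definition of $\mathrm{nil}(Q_j^*)$) gives $I+P_j^{n_j}\subseteq Q_j^*$, hence $(I+P_j^{n_j})\cap K\subseteq Q_j^*\cap K=I$, the reverse inclusion being obvious.

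To prove the mixing lemma it suffices to check $Q_j^*\cap K\subseteq Q_j$, for then $Q_j^*\cap K\subseteq Q_j\cap K=I$. Suppose $x\in Q_j^*\cap K$ but $x\notin Q_j$. Since $Q_j$ is $P_j$-primary and $x\notin Q_j$, the colon ideal $(Q_j:x)$ is again $P_j$-primary, while $(Q_i:x)=R$ for every $i\neq j$ because $x\in Q_i$; thus $(I:x)=\bigcap_i(Q_i:x)=(Q_j:x)$ and $\mathrm{Ass}\big(R/(I:x)\big)=\{P_j\}$. Computing $(I:x)$ from the canonical decomposition instead, one has $(Q_j^*:x)=R$ since $x\in Q_j^*$, so $(I:x)=\bigcap_{i\neq j}(Q_i^*:x)$, where each factor is either $R$ or a $P_i$-primary ideal with $i\neq j$; hence $(I:x)$ has a primary decomposition all of whose radicals lie in $\{P_i:i\neq j\}$, so $P_j\notin\mathrm{Ass}\big(R/(I:x)\big)$ --- a contradiction, the $P_i$ being pairwise distinct. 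Therefore $x\in Q_j$. The only nontrivial ingredient is the standard fact that $(Q:x)$ is $\sqrt{Q}$-primary whenever $Q$ is primary and $x\notin Q$.

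Finally, suppose $I=(I+P_j^{e})\cap K$ for some exponent $e$; we must show $e\geq n_j$. From $I\subseteq Q_j\subseteq P_j$ we get $\sqrt{I+P_j^{e}}=P_j$, so $P_j$ is the unique minimal --- in particular an associated --- prime of $I+P_j^{e}$; fix a minimal primary decomposition $I+P_j^{e}=\hat Q\cap\hat L$ with $\hat Q$ the $P_j$-primary component and every associated prime of $\hat L$ strictly larger than $P_j$. Then $I=\hat Q\cap\hat L\cap K$, and expanding $\hat L$ and $K$ into their primary components yields a primary decomposition of $I$ in which $\hat Q$ is the only component of radical $P_j$; as $P_j\in\mathrm{Ass}(R/I)$, passing to a minimal primary decomposition can neither merge $\hat Q$ with another component nor discard it, so $\hat Q$ is the $P_j$-primary component of $I$ in some minimal primary decomposition. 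By Theorem~\ref{Th Ortiz}(a), $n_j\leq\mathrm{nil}(\hat Q)$, and $P_j^{e}\subseteq I+P_j^{e}\subseteq\hat Q$ forces $\mathrm{nil}(\hat Q)\leq e$; hence $e\geq n_j$, and the proof is complete. The only real obstacle is the mixing lemma; the rest is bookkeeping with minimal primary decompositions together with one use of Theorem~\ref{Th Ortiz}.
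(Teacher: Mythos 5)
Your proof is correct and follows essentially the same route as the paper: the minimality direction is precisely the paper's observation that the $\sqrt{Q_j}$-primary component of $I+\big(\sqrt{Q_j}\big)^{e}$ is a $\sqrt{Q_j}$-primary component of $I$ with index of nilpotency at most $e$, combined with Theorem \ref{Th Ortiz}(a). Your only addition is the self-contained colon-ideal proof of the ``mixing lemma'' $I=Q_j^*\cap\bigcap_{i\neq j}Q_i$, which is the standard compatibility of primary decompositions that the paper uses implicitly.
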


\begin{proof}
It suffices to note that the $\sqrt{Q_j}-$primary component of $I + \big(\sqrt{Q_j}\big)^{e_j}$ is a $\sqrt{Q_j}-$primary component of $I$ whose index of nilpotency is less than or equal to $e_j.$
\end{proof}

Several upper bounds for the index of nilpotency of ideals in a polynomial ring are known (see e.g. the introduction of \cite{OjPie2}). Thus, the above corollary may be considered as a naive algorithm to compute the canonical decomposition of an ideal in a polynomial ring (see \cite[Algorithm 2.6]{OjPie3}).

Let us see how this algorithm works on an example.

\begin{example}\label{Ejem CDnB1}
Let $I = \langle z^2(x-y), z^3 \rangle \subset \mathbbmss{C}[x,y,z].$ Cleary, $I = \langle z^2 \rangle \cap \langle x-y, z^3 \rangle$ is minimal primary decomposition of $I.$ In this case, since $\mathrm{nil}(\langle z^2 \rangle) = 2$ and $\mathrm{nil}(\langle x-y, z^3 \rangle) = 3,$ we have that the indices of nilpotency of the corresponding canonical components are less than or equal to $2$ and $3,$ respectively. Of course, we already know that $\langle z^2 \rangle$ is the $\langle z \rangle-$canonical component of $I$ (because, $\langle z \rangle$ is a minimal prime of $I$); on the other hand, since $\langle z^2 \rangle \subseteq I + \langle x-y, z \rangle^2,$ by Corollary \ref{Cor Pre1}, we have that the index of nilpotency of the $\langle x-y, z \rangle-$canonical component of $I$ is $3.$ Thus, by Corollary \ref{Cor Pre0}, we conclude that the other canonical component is the minimal primary component of $Q = I + \langle x-y, z \rangle^3$ which, in this case, coincides with $Q$ itself.

Observe that $Q$ is not a binomial ideal, this can be checked by direct computation using \cite[Proposition 1.1]{Eisenbud96}.
\end{example}

\medskip
To show the potential of the canonical decomposition, we finish this preliminary section by using it to rephrase the following result on the linear growth of primary decompositions of power of an ideal.

\medskip
\noindent\textbf{Theorem.} (I.~Swanson, \cite{Swanson97}).
\emph{Let $I$ be an ideal of $R.$ There exists an integer $k$ such that for all $n \geq 1$ there exists a primary decomposition $I^n = Q_1 \cap \ldots \cap Q_t$ such that $$\big( \sqrt{Q_j} \big)^{kn} \subseteq Q_j,$$ for all $j = 1, \ldots, t.$}

\begin{theorem}\label{Th Main1}
Let $R$ be a commutative Noetherian ring and let $I$ be an ideal of $R.$ There exists an integer $k$ such that for all $n \geq 1$ $$\mathrm{nil}(Q^*) \leq kn$$ for every canonical component $Q^*$ of $I^n.$
\end{theorem}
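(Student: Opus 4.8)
The plan is to deduce the statement directly from the theorem of Swanson quoted above, combined with property (a) of Theorem \ref{Th Ortiz}. Let $k$ be the integer furnished by Swanson's linear growth theorem for the ideal $I$. Then for every $n \geq 1$ there is a primary decomposition $I^n = Q_1 \cap \cdots \cap Q_s$, with $s = s(n)$ possibly depending on $n$, such that $(\sqrt{Q_j})^{kn} \subseteq Q_j$, that is, $\mathrm{nil}(Q_j) \leq kn$, for all $j = 1, \ldots, s$. I claim that this same $k$ works for the canonical decomposition.

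First I would replace Swanson's decomposition by a minimal one without raising any nilpotency index. Recall that the intersection of finitely many $P$-primary ideals is again $P$-primary, and that if $Q, Q'$ are $P$-primary with $e = \mathrm{nil}(Q)$ and $e' = \mathrm{nil}(Q')$, then, writing $P = \sqrt{Q} = \sqrt{Q'}$, one has $P^{\max(e,e')} \subseteq P^{e} \cap P^{e'} \subseteq Q \cap Q'$, so $\mathrm{nil}(Q \cap Q') \leq \max(e, e')$. Hence, grouping the $Q_j$ according to their radicals and intersecting within each group yields a primary decomposition of $I^n$ with pairwise distinct associated radicals in which every component still has nilpotency index at most $kn$. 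Discarding the redundant components does not alter the surviving ones, so we arrive at a minimal primary decomposition $I^n = \bigcap_{P} Q_P$, indexed by the associated primes $P$ of $R/I^n$, with $\mathrm{nil}(Q_P) \leq kn$ for each such $P$.

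Now let $I^n = \bigcap_{P} Q_P^*$ be the canonical decomposition of $I^n$; its components are indexed by the same set $\mathrm{Ass}(R/I^n)$, since the set of associated primes is common to all minimal primary decompositions. Applying Theorem \ref{Th Ortiz}(a) to the minimal decomposition $\bigcap_P Q_P$ gives $\mathrm{nil}(Q_P^*) \leq \mathrm{nil}(Q_P) \leq kn$ for every $P \in \mathrm{Ass}(R/I^n)$. Since every canonical component $Q^*$ of $I^n$ equals $Q_P^*$ for some $P$, this shows $\mathrm{nil}(Q^*) \leq kn$, which is the assertion.

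I do not expect a serious obstacle: the quantitative content is entirely carried by Swanson's theorem, and Theorem \ref{Th Ortiz} serves only to transport the uniform bound from an arbitrary minimal primary decomposition to the canonical one. The sole point requiring attention is the reduction to a minimal primary decomposition, namely checking that merging primary components sharing a radical and deleting superfluous components never pushes a nilpotency index above $kn$; this is precisely the elementary inclusion $P^{\max(e,e')} \subseteq Q \cap Q'$ isolated above.
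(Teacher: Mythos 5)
Your proof is correct, but it takes a different route from the paper's. The paper does not argue from Swanson's theorem as quoted; instead it invokes Yao's compatibility result \cite[Theorem 3.3]{Yao}, which produces an integer $k$ with $I^n = (I^n + J^{kn}) \cap (I^n : J^{\infty})$ for all $n$ and all ideals $J$. Taking $J = P$ an associated prime of $I^n$, the $P$-primary component of $I^n + P^{kn}$ is then a $P$-primary component of $I^n$ with index of nilpotency at most $kn$, and Theorem \ref{Th Ortiz} finishes the argument. Your version instead takes Swanson's decomposition wholesale, massages it into a minimal one (merging components with a common radical via the inclusion $P^{\max(e,e')} \subseteq Q \cap Q'$, then deleting redundant components), and applies Theorem \ref{Th Ortiz}(a) componentwise; the reduction to minimality is exactly the point that needs care, and you handle it correctly, including the fact that the index set of the canonical decomposition is $\mathrm{Ass}(R/I^n)$. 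The trade-off is modest: your argument is more self-contained relative to the statement the paper actually quotes (Swanson's theorem appears verbatim just above Theorem \ref{Th Main1}, so the result really is a ``rephrasing'' of it), whereas the paper's use of Yao produces, for each associated prime $P$, the explicit ideal $I^n + P^{kn}$ whose $P$-primary component realizes the bound, in the same spirit as Corollaries \ref{Cor Pre0} and \ref{Cor Pre1}. Both yield a linear bound; neither claims the optimal constant.
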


The proof of Theorem \ref{Th Main1} follows immediately from the results introduced in \cite{Yao}, for the same purpose.

\begin{proof}
By \cite[Theorem 3.3]{Yao}, there exists $k \in \mathbb{N}$ such that $$I^n = (I^n+J^{kn}) \cap (I^n : J^\infty),$$ for all $n \in \mathbb{N}$ and for all ideals $J \subseteq R.$ So, if $P$ is an associated prime ideal of $I^n,$ we have that $I^n = (I^n+P^{kn}) \cap (I^n : P^\infty).$ Therefore, since the $P-$primary component of $I^n+P^{kn}$ is a $P-$primary component of $I^n$ with index of nilpotency is less than or equal to $kn,$ we conclude that the index of nilpotency of the $P-$canonical component of $I^n$ is less than or equal to $kn.$
\end{proof}

\section{Canonical cellular decomposition}\label{Sect CCD}

Let $\mathbbmss{k}[\mathbf{t}] = \mathbbmss{k}[t_1, \ldots, t_n]$ be the polynomial ring in $n$ variables over an arbitrary field $\mathbbmss{k}.$

In what follows, given $\delta \subseteq \{1, \ldots, n\},$ we will denote by $\mathfrak{m}_\delta$ the monomial prime ideal $\langle t_j \mid j \not\in \delta \rangle \subseteq \mathbbmss{k}[\mathbf{t}]$ (by convention, if $\delta = \{1, \ldots, n\},$ then $\mathfrak{m}_\delta = \langle 0 \rangle$) and we will write $\mathbf{t}_\delta$ for $\prod_{j \in \delta} t_j.$

\begin{definition}\label{Def Cellular}
We define an ideal $I$ of $\mathbbmss{k}[\mathbf{t}]$ to be cellular if either $I = \langle 1 \rangle$ or $I \neq \langle 1 \rangle$ and, for some $\delta \subseteq \{1, \ldots, n\},$ we have that
\begin{itemize}
\item[1.] $I = \big( I : \mathbf{t}_\delta^\infty \big),$
\item[2.] there exists a positive integer $e$ such that $\mathfrak{m}^e_\delta \subseteq I;$
\end{itemize}
in this case, we say that $I$ is cellular with respect to $\delta$ or, simply, $\delta-$cellular.
\end{definition}

Observe that an ideal $I$ of $\mathbbmss{k}[\mathbf{t}]$ is cellular if, and only if, every variable of $\mathbbmss{k}[\mathbf{t}]$ is either a nonzerodivisor or nilpotent modulo $I.$ In particular, every primary ideal is cellular.

\begin{definition}\label{Def CD}
A cellular decomposition of an ideal $I \subseteq \mathbbmss{k}[\mathbf{t}]$ is an expression of $I$ as an intersection of cellular ideals with respect to different $\delta \subseteq \{1, \ldots, n\},$ say
\begin{equation}\label{ecu cell0} I = \bigcap_{\delta \in \Delta} \mathcal{C}_\delta,\end{equation} for some subset $\Delta$ of the power set of $\{1, \ldots, n\}.$ If moreover we have $\mathcal{C}_\delta' \not\supseteq \bigcap_{\delta \in \Delta \setminus \{\delta'\}} \mathcal{C}_\delta$ for every $\delta' \in \Delta,$ the cellular decomposition (\ref{ecu cell0}) is said to be minimal; in this case, the cellular component $C_\delta$ is said to be a $\delta-$cellular component of $I.$
\end{definition}

\begin{example}
Every minimal primary decomposition of a monomial ideal $I \subseteq \mathbbmss{k}[\mathbf{t}]$ into monomial ideals is a minimal cellular decomposition of $I.$
\end{example}

Cellular decompositions of an ideal $I$ of $\mathbbmss{k}[\mathbf{t}]$ always exist. A simple algorithm for cellular decomposition of binomial ideals can be found in \cite[Algorithm 2]{OjPie};
however, since this algorithm does not actually require a binomial input, it can be also used to compute a cellular decomposition of a (not necessarily binomial) ideal of $\mathbbmss{k}[\mathbf{t}].$ The interested reader may consult \cite{OjPie} or \cite{Kahle} for the details.

Algorithm 2 in \cite{OjPie} forms part of the \texttt{Binomials} package developed by T.~Kahle and is publicly available at
\begin{center}
\url{http://personal-homepages.mis.mpg.de/kahle/bpd/}
\end{center}

\medskip
Now, we will show that every ideal of $\mathbbmss{k}[\mathbf{t}]$ has a canonical cellular decomposition.

\begin{lemma}\label{Lemma Cell0}
Let $I$ be an ideal of $\mathbbmss{k}[\mathbf{t}]$ and let $I = \bigcap_{\delta \in \Delta} \mathcal{C}_\delta$ be a cellular decomposition of $I.$ If $\delta_0 \in \Delta$ is minimal with respect to inclusion, then $$\big( I : \mathfrak{m}_{\delta_0}^\infty \big) = \bigcap_{\delta \in \Delta \setminus \{\delta_0\}} \mathcal{C}_\delta.$$ In particular, the ideal $\bigcap_{\delta \in \Delta \setminus \{\delta_0\}} \mathcal{C}_\delta$ is independent of the particular decomposition of $I.$
\end{lemma}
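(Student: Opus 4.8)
The plan is to push the saturation through the given intersection and evaluate it on each cellular piece separately. First I would record two elementary facts: the colon operation distributes over the (necessarily finite) intersection $\bigcap_{\delta\in\Delta}\mathcal C_\delta$, so that $(I:\mathfrak m_{\delta_0}^N)=\bigcap_{\delta\in\Delta}(\mathcal C_\delta:\mathfrak m_{\delta_0}^N)$ for every $N\geq 1$; and $(I:\mathfrak m_{\delta_0}^\infty)$ is the stable value of the ascending chain $\{(I:\mathfrak m_{\delta_0}^N)\}_{N\geq 1}$. Hence it is enough to prove that, for all sufficiently large $N$, one has $(\mathcal C_{\delta_0}:\mathfrak m_{\delta_0}^N)=\langle 1\rangle$ while $(\mathcal C_\delta:\mathfrak m_{\delta_0}^N)=\mathcal C_\delta$ for every $\delta\in\Delta\setminus\{\delta_0\}$.

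For the $\delta_0$-piece I would invoke the second defining condition of $\delta_0$-cellularity (Definition~\ref{Def Cellular}): $\mathfrak m_{\delta_0}^e\subseteq\mathcal C_{\delta_0}$ for some $e\geq 1$, hence $1\in(\mathcal C_{\delta_0}:\mathfrak m_{\delta_0}^e)$ and so $(\mathcal C_{\delta_0}:\mathfrak m_{\delta_0}^N)=\langle 1\rangle$ for all $N\geq e$ (this also covers the degenerate case $\delta_0=\{1,\ldots,n\}$, where $\mathfrak m_{\delta_0}=\langle 0\rangle$). For a piece $\mathcal C_\delta$ with $\delta\neq\delta_0$, minimality of $\delta_0$ in $\Delta$ forbids $\delta\subsetneq\delta_0$, so $\delta\setminus\delta_0\neq\emptyset$; pick $j_0\in\delta\setminus\delta_0$. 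Then $t_{j_0}$ is one of the generators of $\mathfrak m_{\delta_0}$, whence $t_{j_0}^N\in\mathfrak m_{\delta_0}^N$ and $(\mathcal C_\delta:\mathfrak m_{\delta_0}^N)\subseteq(\mathcal C_\delta:t_{j_0}^N)$. On the other hand, since $j_0\in\delta$ and $\mathcal C_\delta=(\mathcal C_\delta:\mathbf t_\delta^\infty)$, the variable $t_{j_0}$ is a nonzerodivisor modulo $\mathcal C_\delta$: if $x\,t_{j_0}\in\mathcal C_\delta$ then, multiplying by $\prod_{j\in\delta,\,j\neq j_0}t_j$, we get $x\,\mathbf t_\delta\in\mathcal C_\delta$, so $x\in(\mathcal C_\delta:\mathbf t_\delta)\subseteq(\mathcal C_\delta:\mathbf t_\delta^\infty)=\mathcal C_\delta$. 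An immediate induction on $N$ then gives $(\mathcal C_\delta:t_{j_0}^N)=\mathcal C_\delta$, and together with the trivial inclusion $\mathcal C_\delta\subseteq(\mathcal C_\delta:\mathfrak m_{\delta_0}^N)$ this yields $(\mathcal C_\delta:\mathfrak m_{\delta_0}^N)=\mathcal C_\delta$.

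Assembling the pieces, for $N\geq e$ we obtain $(I:\mathfrak m_{\delta_0}^N)=\langle 1\rangle\cap\bigcap_{\delta\in\Delta\setminus\{\delta_0\}}\mathcal C_\delta=\bigcap_{\delta\in\Delta\setminus\{\delta_0\}}\mathcal C_\delta$, a value independent of $N$, so it equals $(I:\mathfrak m_{\delta_0}^\infty)$, which is the asserted identity. The last sentence of the lemma is then immediate: the left-hand side $(I:\mathfrak m_{\delta_0}^\infty)$ depends only on $I$ and on $\delta_0$, hence $\bigcap_{\delta\in\Delta\setminus\{\delta_0\}}\mathcal C_\delta$ cannot depend on the remaining components of the chosen cellular decomposition. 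I expect the only genuine step to be the case $\delta\neq\delta_0$ — specifically, using the minimality of $\delta_0$ to locate inside $\mathfrak m_{\delta_0}$ a single variable that is a nonzerodivisor modulo $\mathcal C_\delta$, which allows one to replace the whole power $\mathfrak m_{\delta_0}^N$ (generated by products of many variables) by the monomial $t_{j_0}^N$; the rest is routine manipulation of colon ideals.
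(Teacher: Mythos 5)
Your proof is correct and follows essentially the same route as the paper: minimality of $\delta_0$ yields a variable $t_{j_0}$ with $j_0\in\delta\setminus\delta_0$ that is a nonzerodivisor modulo $\mathcal{C}_\delta$, so $(\mathcal{C}_\delta:\mathfrak{m}_{\delta_0}^\infty)=\mathcal{C}_\delta$ for $\delta\neq\delta_0$, while $(\mathcal{C}_{\delta_0}:\mathfrak{m}_{\delta_0}^\infty)=\langle 1\rangle$ by the nilpotency condition. You merely spell out the details (distributing the colon over the intersection and verifying the nonzerodivisor property from the saturation condition) that the paper leaves implicit.
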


\begin{proof}
Due to the minimality of $\delta_0,$ for each $\delta \in \Delta \setminus \{\delta_0\},$ there is, at least, a variable in $\{t_i \mid i \not\in \delta_0\}$ which is a nonzerodivisor modulo $\mathcal{C}_\delta.$  Therefore, $$\big( C_\delta : \mathfrak{m}_{\delta_0}^\infty \big) = \left\{\begin{array}{ccc} C_\delta & \text{if} & \delta \neq \delta_0\\ \langle 1 \rangle & \text{if} & \delta = \delta_0\end{array}\right.$$ and our claim follows.
\end{proof}

\begin{theorem}\label{Th Cell0}
Let $I$ be an ideal of $\mathbbmss{k}[\mathbf{t}]$ and let $I = \bigcap_{\delta \in \Delta} \mathcal{C}_\delta$ be a minimal cellular decomposition of $I.$ Then the subset $\Delta$ of the power set of $\{1, \ldots, n\}$ is independent of the particular decomposition of $I.$
\end{theorem}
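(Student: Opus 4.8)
The plan is to prove this by induction on $|\Delta|$, using Lemma \ref{Lemma Cell0} as the key tool for removing one cellular component at a time. The base case $|\Delta| = 1$ is essentially trivial, since then $I = \mathcal{C}_{\delta_0}$ is itself cellular, and the set $\delta$ with respect to which an ideal is cellular is determined by $I$ (namely, $\delta$ is the set of indices $j$ such that $t_j$ is a nonzerodivisor modulo $I$), so $\Delta = \{\delta_0\}$ is forced.

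For the inductive step, suppose $I = \bigcap_{\delta \in \Delta} \mathcal{C}_\delta = \bigcap_{\delta' \in \Delta'} \mathcal{C}'_{\delta'}$ are two minimal cellular decompositions. First I would show that $\Delta$ and $\Delta'$ have the same minimal elements with respect to inclusion. Indeed, if $\delta_0$ is a minimal element of $\Delta$, I claim $\delta_0 \in \Delta'$. The point is that minimality of $\delta_0$ combined with Lemma \ref{Lemma Cell0} shows that localizing at $\mathfrak{m}_{\delta_0}$ detects $\delta_0$: since $\big(I : \mathfrak{m}_{\delta_0}^\infty\big) = \bigcap_{\delta \in \Delta \setminus \{\delta_0\}} \mathcal{C}_\delta \supsetneq I$ (strict by minimality of the decomposition), the same must hold for the primed decomposition, forcing some $\delta' \in \Delta'$ with $\big(\mathcal{C}'_{\delta'} : \mathfrak{m}_{\delta_0}^\infty\big) \neq \mathcal{C}'_{\delta'}$, i.e. some variable $t_i$ with $i \notin \delta_0$ is nilpotent (equivalently, not a nonzerodivisor) modulo $\mathcal{C}'_{\delta'}$, hence $\delta' \subseteq \delta_0$. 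Running the symmetric argument and using minimality of $\delta_0$ in $\Delta$, one concludes $\delta' = \delta_0$, so $\delta_0 \in \Delta'$ and in fact $\delta_0$ is minimal in $\Delta'$ as well. The reverse inclusion is symmetric, so $\Delta$ and $\Delta'$ have the same set of minimal elements.

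Now pick a common minimal element $\delta_0 \in \Delta \cap \Delta'$. By Lemma \ref{Lemma Cell0} applied to both decompositions,
\begin{equation*}
\bigcap_{\delta \in \Delta \setminus \{\delta_0\}} \mathcal{C}_\delta = \big(I : \mathfrak{m}_{\delta_0}^\infty\big) = \bigcap_{\delta' \in \Delta' \setminus \{\delta_0\}} \mathcal{C}'_{\delta'} =: J,
\end{equation*}
and both sides are minimal cellular decompositions of the ideal $J$ (minimality is inherited: dropping $\mathcal{C}_{\delta_0}$ from a minimal decomposition of $I$ cannot create a redundancy, since $\mathcal{C}_{\delta'} \supseteq \bigcap_{\delta \in \Delta \setminus \{\delta_0, \delta'\}} \mathcal{C}_\delta$ together with the minimal decomposition of $I$ would contradict minimality there — I should spell this small check out carefully). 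Since $|\Delta \setminus \{\delta_0\}| < |\Delta|$, the induction hypothesis gives $\Delta \setminus \{\delta_0\} = \Delta' \setminus \{\delta_0\}$, and adding $\delta_0$ back yields $\Delta = \Delta'$.

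I expect the main obstacle to be the bookkeeping around minimality: specifically, verifying cleanly that (i) a minimal-with-respect-to-inclusion $\delta_0 \in \Delta$ is genuinely "removable" in the sense that $\big(I : \mathfrak{m}_{\delta_0}^\infty\big)$ strictly contains $I$ — this is where the minimality of the cellular decomposition (not just of $\delta_0$) enters — and (ii) that the truncated decomposition of $J$ remains minimal so the induction hypothesis applies. Both are short arguments, but they must be stated precisely because the whole induction rests on them; the $t_i$-nonzerodivisor-versus-nilpotent dichotomy from the remark following Definition \ref{Def Cellular} is the mechanism that makes $\delta$ intrinsic to a single cellular ideal and hence makes the localization argument work.
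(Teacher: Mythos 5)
Your proposal is correct and follows essentially the paper's own argument: induction on $\#\Delta$, using Lemma \ref{Lemma Cell0} to identify $\bigcap_{\delta\in\Delta\setminus\{\delta_0\}}\mathcal{C}_\delta$ with the decomposition-independent ideal $(I:\mathfrak{m}_{\delta_0}^\infty)$ for a minimal $\delta_0$ and then peeling that component off, the only difference being that the paper shortcuts your ``same minimal elements'' step by choosing $\delta_0$ minimal in $\Delta\cup\Delta'$ and deriving a contradiction with minimality of the decomposition if $\delta_0\notin\Delta'$. One wording fix: from $(\mathcal{C}'_{\delta'}:\mathfrak{m}_{\delta_0}^\infty)\neq\mathcal{C}'_{\delta'}$ you should conclude that \emph{every} variable $t_i$ with $i\notin\delta_0$ fails to be a nonzerodivisor modulo $\mathcal{C}'_{\delta'}$ (if even one generator $t_i$ of $\mathfrak{m}_{\delta_0}$ were a nonzerodivisor, the saturation would equal $\mathcal{C}'_{\delta'}$), hence by the cellular dichotomy all such $t_i$ are nilpotent and $\delta'\subseteq\delta_0$; the phrase ``some variable is nilpotent'' does not by itself yield that inclusion.
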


\begin{proof}
We proceed by induction on the cardinality on $\Delta.$ Of course, if $\# \Delta = 1, I$ is cellular and there is nothing to prove. Otherwise, we consider any other minimal cellular decomposition of $I,$ say $I = \bigcap_{\delta' \in \Delta'} \mathcal{C}'_{\delta'}.$ Let $\delta_0 \in \Delta \cup \Delta'$ be minimal with respect to inclusion, without loss of generality, we may assume $\delta_0 \in \Delta.$ By Lemma \ref{Lemma Cell0}, we have $$ \bigcap_{\delta \in \Delta \setminus \{\delta_0\}} \mathcal{C}_\delta = (I : \mathfrak{m}_{\delta_0}^\infty) = \bigcap_{\delta' \in \Delta' \setminus \{\delta_0\}} \mathcal{C}'_\delta.$$ If $\delta_0 \not\in \Delta',$ the right-most term in the above equalities is equal to $I;$ so, $I = \bigcap_{\delta \in \Delta \setminus \{\delta_0\}} \mathcal{C}_\delta$ and therefore $ \bigcap_{\delta \in \Delta \setminus \{\delta_0\}} \mathcal{C}_\delta \subseteq \mathcal{C}_{\delta_0}$ in clear contradiction with the minimality of the cellular decomposition $I = \bigcap_{\delta \in \Delta} \mathcal{C}_\delta.$ Thus, we have that $\delta_0 \in \Delta'.$ Now, since $(I : \mathfrak{m}_{\delta_0}^\infty)$ does not depend on the chosen cellular decompositions, we conclude by induction hypothesis.
\end{proof}

\begin{notation}
Let $I$ be an ideal of $\mathbbmss{k}[\mathbf{t}].$ In what follows, we will denote by $\Delta(I)$ the subset of the power set of $\{1, \ldots, n\}$ appearing in any minimal cellular decomposition of $I$ to emphasize that $\Delta(I)$ depends only on $I.$
\end{notation}

\begin{corollary}\label{Cor Cell1}(Compatibility).
Let $I$ be an ideal of $\mathbbmss{k}[\mathbf{t}]$ and set $\Delta = \Delta(I).$ If $I = \bigcap_{\delta \in \Delta} \mathcal{C}_\delta$ and $I = \bigcap_{\delta \in \Delta} \mathcal{C}'_\delta$ are two minimal cellular decompositions of $I,$ then
$$
I = \Big( \bigcap_{\delta \in \Delta_1} \mathcal{C}_\delta \Big) \cap \Big( \bigcap_{\delta \in \Delta_2} \mathcal{C}'_\delta \Big)
$$
is a minimal cellular decomposition of $I,$ for every partition of $\Delta$ into disjoint subsets $\Delta_1$ and $\Delta_2.$
\end{corollary}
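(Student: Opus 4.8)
The plan is to follow the pattern of the proof of Theorem \ref{Th Cell0}: argue by induction on $\#\Delta$, peeling off one cellular component by means of Lemma \ref{Lemma Cell0}. If $\#\Delta=1$ there is nothing to prove (both decompositions are $I$ itself), and if $\Delta_1$ or $\Delta_2$ is empty the asserted decomposition is one of the two hypotheses; so I may assume $\#\Delta\geq 2$ and $\Delta_1,\Delta_2\neq\emptyset$. Since the statement is symmetric in the pairs $(\mathcal{C}_\bullet,\Delta_1)$ and $(\mathcal{C}'_\bullet,\Delta_2)$, after choosing $\delta_0\in\Delta$ minimal with respect to inclusion I may assume $\delta_0\in\Delta_1$.

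Next I would pass to $I':=(I:\mathfrak{m}_{\delta_0}^\infty)$. Lemma \ref{Lemma Cell0}, applied to each of the two given minimal cellular decompositions, gives $I'=\bigcap_{\delta\in\Delta\setminus\{\delta_0\}}\mathcal{C}_\delta=\bigcap_{\delta\in\Delta\setminus\{\delta_0\}}\mathcal{C}'_\delta$, and a one-line check (were $\mathcal{C}_{\delta_1}$ superfluous in $\bigcap_{\delta\neq\delta_0}\mathcal{C}_\delta$, intersecting with $\mathcal{C}_{\delta_0}$ would make it superfluous already in $\bigcap_{\delta\in\Delta}\mathcal{C}_\delta$) shows that both are \emph{minimal} cellular decompositions of $I'$, now indexed by the $(\#\Delta-1)$-element set $\Delta\setminus\{\delta_0\}$. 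The induction hypothesis, applied to $I'$ and the partition $\Delta\setminus\{\delta_0\}=(\Delta_1\setminus\{\delta_0\})\sqcup\Delta_2$, then tells me that $I'=\big(\bigcap_{\delta\in\Delta_1\setminus\{\delta_0\}}\mathcal{C}_\delta\big)\cap\big(\bigcap_{\delta\in\Delta_2}\mathcal{C}'_\delta\big)$ is a minimal cellular decomposition of $I'$. Since $I=\mathcal{C}_{\delta_0}\cap\bigcap_{\delta\in\Delta\setminus\{\delta_0\}}\mathcal{C}_\delta=\mathcal{C}_{\delta_0}\cap I'$, intersecting this decomposition of $I'$ with $\mathcal{C}_{\delta_0}$ yields $\big(\bigcap_{\delta\in\Delta_1}\mathcal{C}_\delta\big)\cap\big(\bigcap_{\delta\in\Delta_2}\mathcal{C}'_\delta\big)=I$, which is a cellular decomposition because the subsets $\delta$ occurring in it are pairwise distinct.

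It then remains to verify that this mixed decomposition of $I$ is minimal. That $\mathcal{C}_{\delta_0}$ cannot be dropped is immediate, since its omission leaves $I'\neq I$ by minimality of $\bigcap_{\delta\in\Delta}\mathcal{C}_\delta$. Fix a $\delta_1\in\Delta\setminus\{\delta_0\}$ and write $C$ for the component it indexes in the mixed decomposition (so $C=\mathcal{C}_{\delta_1}$ or $C=\mathcal{C}'_{\delta_1}$). Minimality of the decomposition of $I'$ produced above provides an element $f$ lying in all of its components except $C$; in particular $f\notin I$. Such an $f$ need not lie in $\mathcal{C}_{\delta_0}$, so I would correct it: since $\delta_0$ is minimal there is an index $i\in\delta_1\setminus\delta_0$, and then $t_i\in\mathfrak{m}_{\delta_0}$ is a nonzerodivisor modulo $C$, so---exactly as in the proof of Lemma \ref{Lemma Cell0}---$C=(C:\mathfrak{m}_{\delta_0}^\infty)$. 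Choosing $e$ with $\mathfrak{m}_{\delta_0}^e\subseteq\mathcal{C}_{\delta_0}$, the relation $f\notin C=(C:\mathfrak{m}_{\delta_0}^e)$ yields $g\in\mathfrak{m}_{\delta_0}^e$ with $fg\notin C$. Then $fg$ lies in $\mathcal{C}_{\delta_0}$ and, as every other component is an ideal containing $f$, in all components of the mixed decomposition other than $C$; but $fg\notin I$, so $C$ is not redundant. This closes the induction.

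I expect the minimality verification to be the only real obstacle: Lemma \ref{Lemma Cell0} and the induction deliver minimality for the \emph{larger} ideal $I'$, and intersecting back with $\mathcal{C}_{\delta_0}$ contracts every component, so the non-redundancy witnesses available for $I'$ must first be repaired---by multiplying into $\mathfrak{m}_{\delta_0}^e$---before they certify non-redundancy for $I$. What makes the repair legitimate is that each surviving cellular component is unchanged by saturating at $\mathfrak{m}_{\delta_0}$, which is exactly why $\delta_0$ is taken to be minimal.
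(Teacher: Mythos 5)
Your proposal is correct and follows essentially the same route as the paper: induction on $\#\Delta$, peeling off a minimal $\delta_0$ via Lemma \ref{Lemma Cell0} and applying the induction hypothesis to $(I:\mathfrak{m}_{\delta_0}^\infty)$. The only difference is that you spell out the verification that the mixed decomposition of $I$ remains minimal (repairing the non-redundancy witnesses by multiplying into $\mathfrak{m}_{\delta_0}^e$), a point the paper's proof leaves implicit.
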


\begin{proof}
We proceed by induction on the cardinality on $\Delta.$ Again if $\# \Delta = 1, I$ is cellular and there is nothing to prove. Otherwise, let $\delta_0 \in \Delta$ be minimal with respect to inclusion and define $\bar\Delta = \Delta \setminus \{\delta_0\}.$ By Lemma \ref{Lemma Cell0}, we have $$\bigcap_{\delta \in \bar\Delta} \mathcal{C}_\delta = (I : \mathfrak{m}_{\delta_0}^\infty) = \bigcap_{\delta \in \bar\Delta} \mathcal{C}'_\delta.$$ So, both $$I = \Big( \bigcap_{\delta \in \bar\Delta} \mathcal{C}_\delta \Big) \cap \mathcal{C}'_\delta\quad \text{and}\quad I = \Big( \bigcap_{\delta \in \bar\Delta} \mathcal{C}'_\delta \Big) \cap \mathcal{C}_\delta$$ are minimal cellular decompositions of $I.$ The result follows now by applying the induction hypothesis to $(I : \mathfrak{m}_{\delta_0}^\infty).$
\end{proof}

After Corollary \ref{Cor Cell1}, the proof of the following theorem is just an adaptation of the proof of Theorem \ref{Th Ortiz} given by I.~Swanson in \cite{Swanson07}, but we include it here for completeness:

\begin{theorem}\label{Th CCD}
Every ideal $I$ in $\mathbbmss{k}[\mathbf{t}]$ admits a unique minimal cellular decomposition
$$I = \bigcap_{\delta \in \Delta(I)} \mathcal{C}_\delta^*$$
such that if $I = \bigcap_{\delta \in \Delta(I)} \mathcal{C}_\delta$ is another minimal cellular decomposition, then we have
\begin{itemize}
\item[(a)] $\mathrm{nil}(\mathcal{C}_\delta^{*}) \leq \mathrm{nil}(\mathcal{C}_\delta),$ for every ${\delta \in \Delta(I)}.$
\item[(b)] If $\mathrm{nil}(\mathcal{C}_\delta^*) = \mathrm{nil}(\mathcal{C}_\delta)$ for some $\delta,$ then $\mathcal{C}_\delta^* \subseteq \mathcal{C}_\delta.$
\end{itemize}
\end{theorem}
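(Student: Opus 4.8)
The plan is to run the argument in close parallel with I.~Swanson's proof of Theorem \ref{Th Ortiz} in \cite{Swanson07}, systematically replacing ``$P$-primary component'' by ``$\delta$-cellular component'', the associated prime $P$ by the monomial ideal $\mathfrak{m}_\delta$, and the classical compatibility of primary components by Corollary \ref{Cor Cell1}. The proof proceeds by induction on $\#\Delta(I)$, which is legitimate precisely because $\Delta(I)$ is the same set for every minimal cellular decomposition (Theorem \ref{Th Cell0}). If $\#\Delta(I)=1$ the ideal $I$ is itself cellular and there is nothing to prove; so assume $\#\Delta(I)>1$ and fix $\delta_0\in\Delta(I)$ minimal with respect to inclusion. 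By Lemma \ref{Lemma Cell0} the ideal $J:=(I:\mathfrak{m}_{\delta_0}^\infty)$ equals $\bigcap_{\delta\neq\delta_0}\mathcal{C}_\delta$ for every minimal cellular decomposition $I=\bigcap_{\delta\in\Delta(I)}\mathcal{C}_\delta$; in particular $\Delta(J)=\Delta(I)\setminus\{\delta_0\}$ and (after checking that the restriction of any minimal cellular decomposition of $I$ is still a minimal one of $J$) the induction hypothesis furnishes the canonical cellular decomposition $J=\bigcap_{\delta\neq\delta_0}\mathcal{C}_\delta^{*}$.

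The heart of the proof is the construction of the remaining component $\mathcal{C}_{\delta_0}^{*}$. Since $\delta_0$ is minimal, the $\delta_0$-cellular components of $I$ are exactly the $\delta_0$-cellular ideals $\mathcal{C}$ with $\mathcal{C}\cap J=I$; write $\mathcal{S}$ for this non-empty set and put $e_{\delta_0}:=\min\{\mathrm{nil}(\mathcal{C})\mid \mathcal{C}\in\mathcal{S}\}$. I would then set $\mathcal{C}_{\delta_0}^{*}:=\big(I+\mathfrak{m}_{\delta_0}^{e_{\delta_0}}:\mathbf{t}_{\delta_0}^\infty\big)$, which is the cellular analogue of Corollary \ref{Cor Pre0}: the operation $(\,\cdot\,:\mathbf{t}_{\delta_0}^\infty)$ extracts the component supported on the largest cell of $I+\mathfrak{m}_{\delta_0}^{e_{\delta_0}}$ (all variables outside $\delta_0$ being nilpotent modulo it), dually to the role of $(\,\cdot\,:\mathfrak{m}_{\delta_0}^\infty)$ in Lemma \ref{Lemma Cell0}. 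First one checks that $\mathcal{C}_{\delta_0}^{*}\in\mathcal{S}$: it is $\mathbf{t}_{\delta_0}$-saturated by construction, contains $\mathfrak{m}_{\delta_0}^{e_{\delta_0}}$, and is proper because $\mathbf{t}_{\delta_0}^{N}\in I+\mathfrak{m}_{\delta_0}^{e_{\delta_0}}$ would contradict $\mathbf{t}_{\delta_0}$ being a nonzerodivisor modulo a minimiser $\mathcal{C}\in\mathcal{S}$; and $\mathcal{C}_{\delta_0}^{*}\cap J=I$ follows from $I\subseteq\mathcal{C}_{\delta_0}^{*}$ together with $\mathcal{C}_{\delta_0}^{*}\subseteq\mathcal{C}$ for any such $\mathcal{C}$. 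The containment $\mathcal{C}_{\delta_0}^{*}\subseteq\mathcal{C}$ for every $\mathcal{C}\in\mathcal{S}$ with $\mathrm{nil}(\mathcal{C})=e_{\delta_0}$ is immediate, since then $\mathfrak{m}_{\delta_0}^{e_{\delta_0}}\subseteq(\sqrt{\mathcal{C}})^{e_{\delta_0}}\subseteq\mathcal{C}$, hence $I+\mathfrak{m}_{\delta_0}^{e_{\delta_0}}\subseteq\mathcal{C}$ and, saturating by $\mathbf{t}_{\delta_0}$, $\mathcal{C}_{\delta_0}^{*}\subseteq(\mathcal{C}:\mathbf{t}_{\delta_0}^\infty)=\mathcal{C}$. \textbf{The step I expect to be the main obstacle} is proving $\mathrm{nil}(\mathcal{C}_{\delta_0}^{*})=e_{\delta_0}$: the inequality $\mathrm{nil}(\mathcal{C}_{\delta_0}^{*})\ge e_{\delta_0}$ is automatic from $\mathcal{C}_{\delta_0}^{*}\in\mathcal{S}$, but the reverse inequality is not as cheap as in Swanson's primary setting (where $\sqrt{Q^{*}}=P$ forces $\mathrm{nil}(Q^{*})\le e_{P}$), because $\sqrt{\mathcal{C}_{\delta_0}^{*}}$ need not equal $\mathfrak{m}_{\delta_0}$. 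One computes $\sqrt{\mathcal{C}_{\delta_0}^{*}}=(\sqrt{I+\mathfrak{m}_{\delta_0}}:\mathbf{t}_{\delta_0}^\infty)$ and $\big((\sqrt{I+\mathfrak{m}_{\delta_0}}:\mathbf{t}_{\delta_0}^\infty)\big)^{e_{\delta_0}}\subseteq\big((\sqrt{I+\mathfrak{m}_{\delta_0}})^{e_{\delta_0}}:\mathbf{t}_{\delta_0}^\infty\big)$, and then must bound this using the minimality of $e_{\delta_0}$ and $I+\mathfrak{m}_{\delta_0}^{e_{\delta_0}}\subseteq\mathcal{C}_{\delta_0}$ for a minimiser, i.e.\ one transcribes to the cellular situation Swanson's trick of passing to the ideal $I+P^{e}$; this is where the real content of the ``adaptation'' lies.

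Granting $\mathcal{C}_{\delta_0}^{*}$, the rest is bookkeeping. Applying Corollary \ref{Cor Cell1} to the partition $\{\delta_0\}\sqcup(\Delta(I)\setminus\{\delta_0\})$ (and, in general, peeling off one index of $\Delta(I)$ at a time so as to realise all the canonical components simultaneously) shows that $I=\mathcal{C}_{\delta_0}^{*}\cap\bigcap_{\delta\neq\delta_0}\mathcal{C}_\delta^{*}$ is a minimal cellular decomposition of $I$. For $(a)$: given any minimal cellular decomposition $I=\bigcap_\delta\mathcal{C}_\delta$, Lemma \ref{Lemma Cell0} restricts it to one of $J$ with the same components $\mathcal{C}_\delta$ for $\delta\neq\delta_0$, so the induction hypothesis gives $\mathrm{nil}(\mathcal{C}_\delta^{*})\le\mathrm{nil}(\mathcal{C}_\delta)$ there, while $\mathcal{C}_{\delta_0}\in\mathcal{S}$ gives $\mathrm{nil}(\mathcal{C}_{\delta_0}^{*})=e_{\delta_0}\le\mathrm{nil}(\mathcal{C}_{\delta_0})$. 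For $(b)$: if equality holds at some $\delta$, either $\delta\neq\delta_0$ and we invoke induction, or $\delta=\delta_0$ and then $\mathcal{C}_{\delta_0}$ lies among the minimisers of $\mathrm{nil}$ in $\mathcal{S}$, whence $\mathcal{C}_{\delta_0}^{*}\subseteq\mathcal{C}_{\delta_0}$ by the containment established above. Finally, uniqueness of a decomposition satisfying $(a)$ and $(b)$ is formal: for any competitor $\bigcap_\delta\mathcal{C}_\delta^{\sharp}$, applying $(a)$ in both directions forces $\mathrm{nil}(\mathcal{C}_\delta^{*})=\mathrm{nil}(\mathcal{C}_\delta^{\sharp})$ for every $\delta$, and then $(b)$ in both directions forces $\mathcal{C}_\delta^{*}=\mathcal{C}_\delta^{\sharp}$. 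Note that no hypothesis on $\mathbbmss{k}$ enters anywhere in this scheme.
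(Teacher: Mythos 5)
The gap is exactly the step you flag yourself, and it is not closable by ``transcribing Swanson's trick''. Your construction hinges on the explicit candidate $\mathcal{C}_{\delta_0}^{*}=\big((I+\mathfrak{m}_{\delta_0}^{e_{\delta_0}}):\mathbf{t}_{\delta_0}^{\infty}\big)$ having index of nilpotency exactly $e_{\delta_0}$. In Swanson's primary setting the analogous bound is free because the candidate $(I+P^{e})_P$ has radical exactly $P$, so $P^{e}\subseteq I+P^{e}$ already gives $\mathrm{nil}\le e$; here $\sqrt{\mathcal{C}_{\delta_0}^{*}}$ is in general strictly larger than $\mathfrak{m}_{\delta_0}$, and nothing in the minimality of $e_{\delta_0}$ controls the powers of the extra radical elements after saturation. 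In fact the identity $\mathrm{nil}\big((I+\mathfrak{m}_{\delta}^{e}):\mathbf{t}_{\delta}^{\infty}\big)=e$ is precisely the content of Theorem \ref{Th BCCD}, which is proved only for \emph{binomial} ideals in \emph{characteristic zero} and rests on the nontrivial results \cite[Theorem 3.1 and Corollary 3.1]{OjPie2}; in positive characteristic the saturation can strictly increase the index of nilpotency, and for non-binomial ideals no such formula is claimed. Your closing remark that ``no hypothesis on $\mathbbmss{k}$ enters anywhere'' should have been the warning sign: if your scheme worked as stated, it would establish the formula of Theorem \ref{Th BCCD} for arbitrary ideals over arbitrary fields, which the paper deliberately does not assert. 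So as written the argument proves Theorem \ref{Th CCD} only under hypotheses the theorem does not have, and leaves its key step unproved in general.

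The paper's proof avoids any explicit formula. After Corollary \ref{Cor Cell1} (compatibility), it fixes $\delta$ and considers the set $S$ of all $\delta$-cellular components of $I$ attaining the minimal index of nilpotency $e$; the intersection of members of $S$ is again $\mathbf{t}_\delta$-saturated, contains $\mathfrak{m}_\delta^{e}$, and has index of nilpotency $e$ because the nilpotency index of an intersection never exceeds that of its factors, while compatibility (together with Theorem \ref{Th Cell0}) shows it is again a $\delta$-cellular component of $I$; a minimal element of $S$ is then taken as $\mathcal{C}^{*}_{\delta}$, and conditions (a) and (b) follow at once. The parts of your proposal that are sound (the induction via Lemma \ref{Lemma Cell0}, the characterization of $\delta_0$-cellular components as $\delta_0$-cellular ideals $\mathcal{C}$ with $\mathcal{C}\cap J=I$, and the final uniqueness bookkeeping) are compatible with this, but the existence of the canonical component must be obtained abstractly as above, not from the saturation formula; the formula is then a separate theorem valid only in the binomial, characteristic-zero case.
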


The cellular ideals $\mathcal{C}_\delta^*$ will be called the $\delta-$cellular canonical components of $I$ and we will refer to $\bigcap_{\delta \in \Delta} \mathcal{C}_\delta^*$ as the canonical cellular decomposition of $I.$

\begin{proof}
By Corollary \ref{Cor Cell1}, it suffices to prove that for each $\delta \in \Delta(I),$ there exists  a cellular ideal $\mathcal{C}^*$ with respect to $\delta$ such that $\mathcal{C}^*$ appears as $\delta-$cellular component of $I$ some minimal cellular decomposition of $I, \mathrm{nil}(\mathcal{C}^*)$ is smallest possible, and if $\mathrm{nil}(\mathcal{C}^*) = \mathrm{nil}(\mathcal{C})$ for some $\delta-$cellular component $\mathrm{nil} (\mathcal{C})$ of $I,$ then $\mathrm{nil}(\mathcal{C}^*) \subseteq \mathrm{nil}(\mathcal{C}).$ Let $S$ be the set of all $\delta-$cellular components of $I$ with smallest possible index of nilpotency, say $e.$ Then $S$ is closed under intersections: $$\Big( \big( \bigcap_{\mathcal{C} \in S} \mathcal{C} \big) : \mathbf{t}_\delta^\infty \Big) = \bigcap_{\mathcal{C} \in S} \big( \mathcal{C} : \mathbf{t}_\delta^\infty \big) = \bigcap_{\mathcal{C} \in S} \mathcal{C}$$ and $$\mathfrak{m}_\delta^e \subseteq \bigcap_{\mathcal{C} \in S} \mathcal{C},$$ then $\bigcap_{\mathcal{C} \in S} \mathcal{C}$ is cellular with respect to $\delta$ and $\mathrm{nil}\big(\bigcap_{\mathcal{C} \in S} \mathcal{C}\big) = e.$ Thus $S$ has a minimal element under inclusion. This element, $\mathcal{C}^*,$ satisfies the two conditions of the theorem.
\end{proof}

We next derive a necessary and sufficient condition for the canonical cellular decomposition to be the canonical (primary) decomposition.

\begin{proposition}\label{Prop CCD=CD}
Let $I$ be an ideal in $\mathbbmss{k}[\mathbf{t}]$ and let $\mathcal{C}^*_\delta$ be the $\delta-$cellular canonical component of $I.$ Then $\mathcal{C}^*_\delta$ is primary if, and only if,  $\mathcal{C}^*_\delta$ is the $\sqrt{\mathcal{C}^*_\delta}-$canonical component of $I.$
\end{proposition}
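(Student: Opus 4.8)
The implication ($\Leftarrow$) is immediate: if $\mathcal{C}^*_\delta$ is the $\sqrt{\mathcal{C}^*_\delta}$-canonical component of $I$, then it is a canonical component in the sense of Theorem \ref{Th Ortiz}, hence primary. For ($\Rightarrow$) the plan is to show that, once $\mathcal{C}^*_\delta$ is primary, both $\mathcal{C}^*_\delta$ and the $P$-canonical component $Q^*$ of $I$ (where $P := \sqrt{\mathcal{C}^*_\delta}$) play a double role: each is at once a cellular component of $I$ occurring in a minimal cellular decomposition and a primary component of $I$ occurring in a minimal primary decomposition; the minimality statements in Theorems \ref{Th Ortiz} and \ref{Th CCD} then force $\mathcal{C}^*_\delta = Q^*$. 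Everything is organised around the ``monomial part'' of an associated prime: if an ideal $\mathcal{C}$ is $\delta$-cellular, then each $P' \in \mathrm{Ass}(\mathcal{C})$ contains $t_j$ for every $j \notin \delta$ (such a $t_j$ is nilpotent modulo $\mathcal{C}$) and avoids $t_j$ for every $j \in \delta$ (such a $t_j$ is a nonzerodivisor modulo $\mathcal{C}$), so $P'$ determines $\delta$; conversely, every $P'$-primary ideal with $P'$ of this form is $\delta$-cellular.

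First I would produce a minimal primary decomposition of $I$ in which $\mathcal{C}^*_\delta$ is the $P$-primary component. Starting from the canonical cellular decomposition $I = \bigcap_{\delta' \in \Delta(I)} \mathcal{C}^*_{\delta'}$, refine each $\mathcal{C}^*_{\delta'}$ with $\delta' \neq \delta$ to a minimal primary decomposition and then delete redundant components. The component $\mathcal{C}^*_\delta$ cannot be deleted, since otherwise $\bigcap_{\delta' \neq \delta} \mathcal{C}^*_{\delta'} \subseteq \mathcal{C}^*_\delta$, contradicting the minimality of the cellular decomposition; and, since every remaining radical comes from some $\mathcal{C}^*_{\delta'}$ with $\delta' \neq \delta$ and hence has monomial part different from $\mathfrak{m}_\delta$, the surviving decomposition is a minimal primary decomposition of $I$ whose $P$-primary component is precisely $\mathcal{C}^*_\delta$. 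In particular $P \in \mathrm{Ass}(I)$ and $P$ is the only associated prime of $I$ with monomial part $\mathfrak{m}_\delta$. Dually, starting from the canonical primary decomposition $I = \bigcap_{P' \in \mathrm{Ass}(I)} Q^*_{P'}$ and grouping its components by the monomial part of $P'$, one obtains a cellular decomposition that is minimal (dropping one group drops a genuine primary component), hence indexed by $\Delta(I)$ by Theorem \ref{Th Cell0}, and whose $\delta$-component --- by the uniqueness just noted --- is the single ideal $Q^*$. Thus $Q^*$ occurs as a $\delta$-cellular component of $I$ in a minimal cellular decomposition.

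The conclusion is then a sandwich. By Theorem \ref{Th Ortiz}(a) applied to the minimal primary decomposition above, $\mathrm{nil}(Q^*) \leq \mathrm{nil}(\mathcal{C}^*_\delta)$; by Theorem \ref{Th CCD}(a) applied to the minimal cellular decomposition above, $\mathrm{nil}(\mathcal{C}^*_\delta) \leq \mathrm{nil}(Q^*)$; so the two indices of nilpotency coincide. Then Theorem \ref{Th Ortiz}(b) gives $Q^* \subseteq \mathcal{C}^*_\delta$ and Theorem \ref{Th CCD}(b) gives $\mathcal{C}^*_\delta \subseteq Q^*$, whence $\mathcal{C}^*_\delta = Q^*$ is the $\sqrt{\mathcal{C}^*_\delta}$-canonical component of $I$. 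The step I expect to require the most care is the middle paragraph: checking that $\mathcal{C}^*_\delta$ genuinely survives redundancy deletion as the $P$-primary component, and that regrouping a minimal primary decomposition by monomial parts is again a minimal cellular decomposition. Both reduce to the monomial-part dichotomy recorded above together with the irredundancy of the two canonical decompositions, so no serious new idea should be needed.
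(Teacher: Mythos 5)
Your proposal is correct and follows essentially the same route as the paper: you identify $P=\sqrt{\mathcal{C}^*_\delta}$ as an associated prime of $I$ via irredundancy of the cellular decomposition, use the fact that $P$-primary ideals are $\delta$-cellular (the monomial part of the radical determining $\delta$), and then play Theorem \ref{Th Ortiz} against Theorem \ref{Th CCD}. The only difference is one of detail: your middle paragraph and the final sandwich spell out explicitly what the paper compresses into its closing sentence.
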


\begin{proof}
If $\mathcal{C}^*_\delta$ is a primary ideal, clearly $\sqrt{\mathcal{C}^*_\delta}$ is prime. Furthermore,
$\sqrt{\mathcal{C}^*_\delta}$ is an associated prime of $I;$ otherwise, $\bigcap_{\delta' \in \Delta \setminus \delta} \mathcal{C}^*_{\delta'} \subseteq \mathcal{C}^*_\delta.$ So, there is a canonical component of $I$ whose radical is $\sqrt{\mathcal{C}^*_\delta}.$ Now, since every $\sqrt{\mathcal{C}^*_\delta}-$primary ideal is cellular with respect to $\delta,$ we conclude that $\mathcal{C}^*_\delta$ is the $\sqrt{\mathcal{C}^*_\delta}-$canonical component of $I.$

The converse is obviously true, because the canonical components of $I$ are primary.
\end{proof}

\begin{corollary}
The canonical cellular decomposition agrees with the cano\-nical (primary) decomposition if, and only if, every canonical cellular component is primary.
\end{corollary}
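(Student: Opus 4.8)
The plan is to reduce the statement to Proposition \ref{Prop CCD=CD} together with the uniqueness assertions of Theorems \ref{Th Ortiz} and \ref{Th CCD}. One implication is immediate: if the canonical cellular decomposition $I = \bigcap_{\delta \in \Delta(I)} \mathcal{C}^*_\delta$ coincides with the canonical primary decomposition of $I$, then each $\mathcal{C}^*_\delta$ is one of the canonical components $Q^*_i$ of $I$, hence primary by Theorem \ref{Th Ortiz}.

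For the converse, I would assume that every $\mathcal{C}^*_\delta$ is primary and argue in three steps. First, Proposition \ref{Prop CCD=CD} gives that for each $\delta \in \Delta(I)$ the ideal $\mathcal{C}^*_\delta$ is the $\sqrt{\mathcal{C}^*_\delta}$-canonical component of $I$; in particular $\sqrt{\mathcal{C}^*_\delta}$ is an associated prime of $I$. Second, I would check that $I = \bigcap_{\delta \in \Delta(I)} \mathcal{C}^*_\delta$ is a minimal primary decomposition of $I$: it is a primary decomposition by hypothesis; no component contains the intersection of the others because, by Theorem \ref{Th CCD}, it is a minimal cellular decomposition; and the radicals $\sqrt{\mathcal{C}^*_\delta}$ are pairwise distinct, since a $\delta$-cellular primary ideal satisfies $\delta = \{\, j : t_j \notin \sqrt{\mathcal{C}^*_\delta}\,\}$ (the variables indexed by $\delta$ are nonzerodivisors modulo $\mathcal{C}^*_\delta$, hence outside its radical, while $\mathfrak{m}_\delta \subseteq \sqrt{\mathcal{C}^*_\delta}$), so $\delta$ is recovered from its radical. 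Third, since the radicals of the components of a minimal primary decomposition are exactly the associated primes of $I$, the assignment $\delta \mapsto \sqrt{\mathcal{C}^*_\delta}$ is a bijection between $\Delta(I)$ and $\mathrm{Ass}(I)$ under which $\mathcal{C}^*_\delta$ corresponds to the $\sqrt{\mathcal{C}^*_\delta}$-canonical component of $I$; therefore $\bigcap_{\delta \in \Delta(I)} \mathcal{C}^*_\delta$ is, component by component, the canonical primary decomposition of $I$, which is what we want. (Alternatively, this last step can be argued by checking that $\bigcap_\delta \mathcal{C}^*_\delta$ satisfies conditions (a) and (b) of Theorem \ref{Th Ortiz} and invoking the uniqueness there.)

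The only point that is not routine bookkeeping with the uniqueness statements is the claim that distinct elements of $\Delta(I)$ produce distinct radicals, which is what turns $\bigcap_\delta \mathcal{C}^*_\delta$ into a genuine irredundant primary decomposition indexed by $\mathrm{Ass}(I)$. This rests on the observation, already implicit in the proof of Proposition \ref{Prop CCD=CD}, that a primary ideal is cellular with respect to the unique $\delta$ complementary to the set of variables lying in its radical; I expect this to be the main --- indeed essentially the only --- thing requiring care.
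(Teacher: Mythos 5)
Your proof is correct and takes essentially the same route the paper intends: the corollary is stated there as an immediate consequence of Proposition \ref{Prop CCD=CD}, and your argument just supplies the routine verification (irredundancy from minimality of the cellular decomposition, distinctness of the radicals by recovering $\delta$ from the radical of a $\delta$-cellular primary ideal, and the resulting identification of the radicals with the associated primes of $I$). Nothing further is needed.
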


Some examples of ideals whose canonical cellular decomposition is the canonical (primary) one are shown in Section \ref{Sect Examples}.

\medskip
Finally, let us see that if $\mathrm{char}(\mathbbmss{k}) = 0,$ the cellular ideals appearing the canonical cellular decomposition of a binomial ideal of $\mathbbmss{k}[\mathbf{t}]$ are binomial.

\begin{theorem}\label{Th BCCD}
Let $\mathrm{char}(\mathbbmss{k}) = 0.$ If $\mathcal{C}^*_\delta$ is the $\delta-$cellular canonical component of a binomial ideal $I \subset \mathbbmss{k}[\mathbf{t}],$ then $$\mathcal{C}^*_\delta = \Big( \big( I + \mathfrak{m}_\delta^{\mathrm{nil}(\mathcal{C}^*_\delta)} \big) : \mathbf{t}_\delta^\infty \Big).$$ In particular, $\mathcal{C}^*_\delta$ is binomial.
\end{theorem}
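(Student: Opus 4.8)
The plan is to reduce everything to the explicit formula $\mathcal{C}^*_\delta = \big((I + \mathfrak{m}_\delta^{e}) : \mathbf{t}_\delta^\infty\big)$ with $e := \mathrm{nil}(\mathcal{C}^*_\delta)$, which is the cellular analogue of Corollary \ref{Cor Pre0}; once it is established, the ``in particular'' part follows at once, since $\mathfrak{m}_\delta$ is a monomial ideal, so $I + \mathfrak{m}_\delta^{e}$ is generated by binomials, and the saturation of a binomial ideal by a monomial is again binomial by \cite[Corollary 1.5]{Eisenbud96}. One inclusion of the formula is immediate and valid over any field: as $\mathcal{C}^*_\delta$ is $\delta$-cellular we have $\mathfrak{m}_\delta \subseteq \sqrt{\mathcal{C}^*_\delta}$, hence $\mathfrak{m}_\delta^{e} \subseteq (\sqrt{\mathcal{C}^*_\delta})^{e} \subseteq \mathcal{C}^*_\delta$ by definition of $e$; together with $I \subseteq \mathcal{C}^*_\delta$ this gives $I + \mathfrak{m}_\delta^{e} \subseteq \mathcal{C}^*_\delta$, and saturating by $\mathbf{t}_\delta$, using $(\mathcal{C}^*_\delta : \mathbf{t}_\delta^\infty) = \mathcal{C}^*_\delta$, yields $\big((I + \mathfrak{m}_\delta^{e}) : \mathbf{t}_\delta^\infty\big) \subseteq \mathcal{C}^*_\delta$.

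For the reverse inclusion, set $J := \big((I + \mathfrak{m}_\delta^{e}) : \mathbf{t}_\delta^\infty\big)$. First I would check that $J$ is a $\delta$-cellular component of $I$: it is saturated by construction and contains $\mathfrak{m}_\delta^{e}$, so it is $\delta$-cellular unless $J = \langle 1 \rangle$; starting from the canonical minimal cellular decomposition $I = \mathcal{C}^*_\delta \cap D$ with $D := \bigcap_{\delta' \in \Delta(I)\setminus\{\delta\}} \mathcal{C}^*_{\delta'}$, the inclusions $I \subseteq J \cap D \subseteq \mathcal{C}^*_\delta \cap D = I$ give $I = J \cap D$, and Theorem \ref{Th Cell0} (invariance of $\Delta(I)$), together with the minimality of $I = \mathcal{C}^*_\delta \cap D$, rules out $J = \langle 1 \rangle$ and any redundancy, so $I = J \cap D$ is a minimal cellular decomposition with $\delta$-component $J$. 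The modular law also clarifies the situation: since $\mathfrak{m}_\delta^{e} \subseteq \mathcal{C}^*_\delta$ one gets $I + \mathfrak{m}_\delta^{e} = (\mathcal{C}^*_\delta \cap D) + \mathfrak{m}_\delta^{e} = \mathcal{C}^*_\delta \cap (D + \mathfrak{m}_\delta^{e})$, hence $J = \mathcal{C}^*_\delta \cap \big((D + \mathfrak{m}_\delta^{e}) : \mathbf{t}_\delta^\infty\big)$, so the theorem is equivalent to $\mathcal{C}^*_\delta \subseteq \big((D + \mathfrak{m}_\delta^{e}) : \mathbf{t}_\delta^\infty\big)$; here the factors $\mathcal{C}^*_{\delta'}$ of $D$ with $\delta \not\subseteq \delta'$ are irrelevant (for them some $t_j$ with $j\in\delta\setminus\delta'$ lies in $\sqrt{\mathcal{C}^*_{\delta'}}$, so $\mathbf{t}_\delta$ is nilpotent modulo $\mathcal{C}^*_{\delta'}$ and the factor becomes $\langle 1 \rangle$ after saturation), and the whole difficulty concentrates in the factors with $\delta \subsetneq \delta'$. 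Granting that $\mathrm{nil}(J) \leq e$ (the crux, see below), minimality of $e$ among $\delta$-cellular components forces $\mathrm{nil}(J) = e$, and then Theorem \ref{Th CCD}(b), applied to the two minimal cellular decompositions $I = \bigcap_{\delta'}\mathcal{C}^*_{\delta'}$ and $I = J \cap D$, gives $\mathcal{C}^*_\delta \subseteq J$, completing the argument.

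The inequality $\mathrm{nil}(J) \leq e$ is the main obstacle, and the point where binomiality and $\mathrm{char}(\mathbbmss{k}) = 0$ are genuinely needed. Here is the mechanism I would try. Since $\mathfrak{m}_\delta^{e} \subseteq J$, every prime over $J$ contains $\mathfrak{m}_\delta$, so $\sqrt{J}$ is the full preimage of $\sqrt{\bar{J}}$ under the reduction $\pi \colon \mathbbmss{k}[\mathbf{t}] \to \mathbbmss{k}[\mathbf{t}]/\mathfrak{m}_\delta \cong \mathbbmss{k}[t_j : j \in \delta]$, where $\bar{J} := \pi(J)$; if one can show that $\bar{J}$ is a \emph{radical} ideal, then $\sqrt{J} = \pi^{-1}(\bar{J}) = J + \mathfrak{m}_\delta$, and expanding $(\sqrt{J})^{e} = (J + \mathfrak{m}_\delta)^{e} = \sum_{k=0}^{e} J^{k}\,\mathfrak{m}_\delta^{e-k}$ one sees that every summand with $k \geq 1$ lies in $J$ and the summand $\mathfrak{m}_\delta^{e}$ lies in $J$ as well, whence $\mathrm{nil}(J) \leq e$. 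So the crux is the radicality of $\bar{J}$. Now $J$ is binomial (a monomial saturation of a binomial ideal), hence so is $\bar{J}$, and a short computation using that $J$ is $\delta$-cellular and that $\mathfrak m_\delta^e\subseteq J$ gives the chain $\bar{J} \subseteq \big(\pi(I) : (\textstyle\prod_{j\in\delta} t_j)^\infty\big) \subseteq \sqrt{\bar{J}}$; the middle term is a \emph{lattice ideal} of $\mathbbmss{k}[t_j : j\in\delta]$ (a monomial saturation of the binomial ideal $\pi(I)$ by the product of \emph{all} the variables), which is radical precisely because $\mathrm{char}(\mathbbmss{k}) = 0$ \cite{Eisenbud96}, so $\sqrt{\bar J} = \big(\pi(I) : (\prod_{j\in\delta} t_j)^\infty\big)$. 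The delicate remaining point — and the real obstacle — is to upgrade the first inclusion $\bar J \subseteq \big(\pi(I) : (\prod_{j\in\delta} t_j)^\infty\big)$ to an equality, i.e. to show that in this situation saturation by $\mathbf{t}_\delta$ commutes with reduction modulo $\mathfrak{m}_\delta$; this is exactly what breaks down in positive characteristic, as the identity $\langle t^p - 1\rangle = \langle (t-1)^p\rangle$ (the very phenomenon responsible for the failure of binomiality of primary components in characteristic $p$) already signals.
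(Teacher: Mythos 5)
Your outline reproduces the paper's own strategy: the candidate ideal is the same $J=\big((I+\mathfrak{m}_\delta^{e}):\mathbf{t}_\delta^{\infty}\big)$ with $e=\mathrm{nil}(\mathcal{C}^*_\delta)$, the inclusion $J\subseteq\mathcal{C}^*_\delta$, the verification that $J$ is a binomial $\delta$-cellular component of $I$, and the final appeal to Theorem \ref{Th CCD} are all as in the paper. But the whole weight of the theorem is carried by the inequality $\mathrm{nil}(J)\le e$, and there your argument is not complete: writing $\pi\colon\mathbbmss{k}[\mathbf{t}]\to\mathbbmss{k}[\mathbf{t}]/\mathfrak{m}_\delta$ and $\bar J=\pi(J)$ as you do, the sandwich $\bar J\subseteq\big(\pi(I):(\prod_{j\in\delta}t_j)^{\infty}\big)\subseteq\sqrt{\bar J}$ only identifies $\sqrt{\bar J}$ with a lattice ideal; it does not make $\bar J$ itself radical, and you explicitly leave the needed equality $\bar J=\big(\pi(I):(\prod_{j\in\delta}t_j)^{\infty}\big)$ unproved (``the real obstacle''). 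So, as written, there is a genuine gap exactly at the step where binomiality and $\mathrm{char}(\mathbbmss{k})=0$ enter. For comparison, the paper does not prove this step internally either: it quotes \cite[Theorem 3.1 and Corollary 3.1]{OjPie2}, which give $\mathrm{nil}(J)\ge e$ in general and $\mathrm{nil}(J)=e$ in characteristic zero.

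Your route can in fact be closed, and without the commutation of saturation with reduction that you were after. Since $J$ is binomial, $\delta$-cellular and contains $\mathfrak{m}_\delta^{e}$, no binomial of $J$ can have exactly one term outside $\mathfrak{m}_\delta$: if $\mathbf{t}^{a}-c\,\mathbf{t}^{b}\in J$ with $\mathbf{t}^{a}\in\mathbbmss{k}[\mathbf{t}_\delta]$, $\mathbf{t}^{b}\in\mathfrak{m}_\delta$ and $c\neq 0$, then $\mathbf{t}^{ae}\equiv c^{e}\mathbf{t}^{be}\equiv 0 \pmod{J}$, so some power of $\mathbf{t}_\delta$ lies in $J$ and $1\in(J:\mathbf{t}_\delta^{\infty})=J$, a contradiction. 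Hence $\bar J$ is generated by the binomials of $J$ lying in $\mathbbmss{k}[\mathbf{t}_\delta]$, i.e.\ $\bar J$ is the image of $J\cap\mathbbmss{k}[\mathbf{t}_\delta]$, which contains no monomials (same argument) and is saturated with respect to the product of all the variables of $\mathbbmss{k}[\mathbf{t}_\delta]$; it is therefore a lattice ideal, and these are radical in characteristic zero (see \cite[Section 2]{Eisenbud96}) --- this radicality, not the commutation, is the precise point where characteristic $p$ fails. With $\bar J$ radical, your mechanism $\sqrt{J}=J+\mathfrak{m}_\delta$ and $(\sqrt{J})^{e}\subseteq J+\mathfrak{m}_\delta^{e}=J$ goes through, and the rest of your argument (minimality of $e$ among $\delta$-cellular components, then Theorem \ref{Th CCD}(b)) is correct; alternatively, simply cite \cite{OjPie2} at this point, as the paper does.
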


\begin{proof}
Let $e = \mathrm{nil}(\mathcal{C}^*_\delta)$ and define $\mathcal{C} = \big( ( I + \mathfrak{m}_\delta^e ) : \mathbf{t}_\delta^\infty \big).$ First, we observe that $\mathcal{C} \subseteq \mathcal{C}^*_\delta \neq \langle 1 \rangle.$ Moreover, by construction, $\mathcal{C}$ is cellular with respect to $\delta$ and, by \cite[Corollary 1.7(b)]{Eisenbud96}, is binomial.

By \cite[Theorem 3.1]{OjPie2}, $\mathrm{nil}(\mathcal{C}) \geq e;$ furthermore, if the characteristic of $\mathbbmss{k}$ is zero, the equality holds (\cite[Corollary 3.1]{OjPie2}). Thus, in our case, $\mathcal{C}$ has the smallest index of nilpotency possible. Finally, if $\mathcal{C}'$ is another $\delta-$cellular component of $I$ with $\mathrm{nil}(\mathcal{C}') = e,$ then $I \subseteq \mathcal{C}'$ and $\mathfrak{m}_\delta^e \subseteq \mathcal{C}',$ and so $$\mathcal{C} = \big( ( I + \mathfrak{m}_\delta^e ) : \mathbf{t}_\delta^\infty \big) \subseteq (\mathcal{C}' : \mathbf{t}_\delta^\infty ) = \mathcal{C}'.$$ Therefore, by Theorem \ref{Th CCD}, we have that $\mathcal{C}$ is the $\delta-$cellular canonical component of $I.$
\end{proof}

\begin{corollary}\label{Cor BCCD1}
Let $\mathrm{char}(\mathbbmss{k}) = 0.$ If $I = \bigcap_{\delta \in \Delta} \mathcal{C}_\delta$ is a minimal cellular decomposition of a binomial $I \subset \mathbbmss{k}[\mathbf{t}],$ the index of nilpotency of the $\delta'-$cellular canonical component of $I$ is the smallest integer $e_{\delta'}$ such that
$$
I = \Big( \big( I + \mathfrak{m}_{\delta'}^{e_{\delta'}} \big) : \mathbf{t}_{\delta'}^\infty \Big) \cap \Big(\bigcap_{\delta \in \Delta \setminus \delta'} \mathcal{C}_\delta \Big).
$$
\end{corollary}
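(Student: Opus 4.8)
The plan is to verify that the displayed identity holds for $e_{\delta'} = e^{*} := \mathrm{nil}(\mathcal{C}^{*}_{\delta'})$ and fails for every $e_{\delta'} < e^{*}$; this is exactly the assertion that the smallest admissible $e_{\delta'}$ equals $\mathrm{nil}(\mathcal{C}^{*}_{\delta'})$. Throughout, write $\Delta = \Delta(I)$, which is also the index set of the given minimal cellular decomposition, and for $e \geq 1$ set $J_{e} = \big( ( I + \mathfrak{m}_{\delta'}^{e} ) : \mathbf{t}_{\delta'}^{\infty} \big)$. Note that $J_{e}$ is cellular with respect to $\delta'$ as soon as $J_{e} \neq \langle 1 \rangle$, and that Theorem \ref{Th BCCD} says precisely $J_{e^{*}} = \mathcal{C}^{*}_{\delta'}$.

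For the first part, recall from Theorem \ref{Th CCD} that $I = \bigcap_{\delta \in \Delta} \mathcal{C}^{*}_{\delta}$ is a minimal cellular decomposition over the same index set $\Delta$. Applying the compatibility Corollary \ref{Cor Cell1} to this decomposition together with the given one $I = \bigcap_{\delta \in \Delta} \mathcal{C}_{\delta}$, relative to the partition of $\Delta$ into $\{\delta'\}$ and $\Delta \setminus \{\delta'\}$ (with the first family attached to $\{\delta'\}$), we obtain $I = \mathcal{C}^{*}_{\delta'} \cap \bigcap_{\delta \in \Delta \setminus \delta'} \mathcal{C}_{\delta}$; substituting $\mathcal{C}^{*}_{\delta'} = J_{e^{*}}$ gives exactly the displayed identity with $e_{\delta'} = e^{*}$.

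For the second part, suppose $1 \leq e < e^{*}$ and that the displayed identity holds with $e_{\delta'} = e$, so $I = J_{e} \cap \bigcap_{\delta \in \Delta \setminus \delta'} \mathcal{C}_{\delta}$; I aim for a contradiction. First, $J_{e} \neq \langle 1 \rangle$: otherwise $I = \bigcap_{\delta \in \Delta \setminus \delta'} \mathcal{C}_{\delta}$, contradicting the minimality of $I = \bigcap_{\delta \in \Delta} \mathcal{C}_{\delta}$ (recall $\delta' \in \Delta$). Hence $J_{e}$ is $\delta'$-cellular, and the identity displays $I$ as an intersection of cellular ideals indexed by the pairwise distinct members of $\Delta$; were this decomposition not minimal, deleting redundant components would yield a minimal cellular decomposition of $I$ whose index set is a proper subset of $\Delta = \Delta(I)$, contradicting Theorem \ref{Th Cell0}. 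So $J_{e}$ is a $\delta'$-cellular component of $I$, and Theorem \ref{Th CCD}(a) forces $\mathrm{nil}(J_{e}) \geq \mathrm{nil}(\mathcal{C}^{*}_{\delta'}) = e^{*}$. On the other hand, exactly as in the proof of Theorem \ref{Th BCCD}, the hypotheses that $I$ is binomial and $\mathrm{char}(\mathbbmss{k}) = 0$ give, via \cite[Corollary 3.1]{OjPie2}, the bound $\mathrm{nil}(J_{e}) \leq e$. Combining the two inequalities yields $e^{*} \leq e < e^{*}$, which is absurd.

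The genuinely routine points — that $J_{e}$ is $\delta'$-cellular, that colons distribute over finite intersections and commute with radicals — cause no trouble. The crux is the second part: one must upgrade ``$J_{e}$ is a $\delta'$-cellular ideal containing $I$'' to ``$J_{e}$ is a $\delta'$-cellular \emph{component} of $I$'', so that Theorem \ref{Th CCD}(a) may be invoked to bound $\mathrm{nil}(J_{e})$ from below; the uniqueness of the index set $\Delta(I)$ (Theorem \ref{Th Cell0}) is precisely what makes this upgrade go through, and the complementary characteristic-zero upper bound $\mathrm{nil}(J_{e}) \leq e$ is imported wholesale from the proof of Theorem \ref{Th BCCD}.
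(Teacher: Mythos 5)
Your proposal is correct and follows essentially the route the paper intends: the paper's own proof is the one-line observation that the statement is an immediate consequence of Corollary \ref{Cor Cell1} and Theorem \ref{Th BCCD}, and your first paragraph is exactly that argument. Your second paragraph supplies the minimality direction that the paper leaves implicit, and does so correctly, re-using the characteristic-zero equality $\mathrm{nil}\big( \big( (I+\mathfrak{m}_{\delta'}^{e}) : \mathbf{t}_{\delta'}^{\infty} \big) \big)=e$ from \cite[Corollary 3.1]{OjPie2} (just as in the proof of Theorem \ref{Th BCCD}) together with the invariance of $\Delta(I)$ (Theorem \ref{Th Cell0}) and Theorem \ref{Th CCD}(a).
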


\begin{proof}
It is an immediate consequence of Corollary \ref{Cor Cell1} and Theorem \ref{Th BCCD}.
\end{proof}

\section{Binomial canonical decomposition}\label{Sect BCD}

From now on, we will assume that $\mathbbmss{k}$ is an algebraically closed field of characteristic zero.

\begin{theorem}\label{Th CBPD}
Every binomial ideal $I \subset \mathbbmss{k}[\mathbf{t}]$ admits a unique minimal primary decomposition into binomial ideals: $$I = \bigcap_{i=1}^t Q^{(*)}_i$$ such that if $I = \bigcap_{i=1}^t Q_i$ is another minimal primary decomposition of $I$ into binomial ideals, then
\begin{itemize}
\item[(a)] ${\rm nil}(Q^{(*)}_i) \leq {\rm nil}(Q_i),\ i=1, \ldots, t;$
\item[(b)] if ${\rm nil}(Q^{(*)}_i) = {\rm nil}(Q_i),$ then $Q_i^{(*)} \subseteq Q_i.$
\end{itemize}
\end{theorem}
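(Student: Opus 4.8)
The plan is to mimic the structure of the proof of Ortiz's theorem (Theorem \ref{Th Ortiz}) while working inside the class of binomial primary decompositions, using the machinery of the canonical cellular decomposition established in Section \ref{Sect CCD}. First I would observe that, by the Eisenbud–Sturmfels theory, a binomial ideal $I \subset \mathbbmss{k}[\mathbf{t}]$ does admit \emph{some} minimal primary decomposition into binomial ideals, so the class over which we are optimizing is nonempty; moreover, grouping the binomial primary components sharing a given $\delta$ (that is, those that are $\delta$-cellular after discarding embedded data) shows that any such decomposition refines a minimal cellular decomposition whose $\delta$-component is binomial. Hence $\Delta(I)$ organizes the associated primes of $I$, and for each associated prime $P$ there is a well-defined $\delta_P \in \Delta(I)$ with $\sqrt{\mathcal{C}^*_{\delta_P}} \supseteq P$ or, more precisely, the $P$-primary binomial components that can occur are exactly those appearing inside binomial cellular-refinements of the $\delta_P$-cellular canonical component.

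Next I would fix an associated prime $P$ and, exactly as in Swanson's argument reproduced for Theorem \ref{Th CCD}, let $S$ be the set of all $P$-primary binomial ideals $Q$ that occur as the $P$-primary component in some minimal binomial primary decomposition of $I$ and that have the smallest possible index of nilpotency $e$ among such. The key point is that $S$ is closed under intersection: if $Q_1, Q_2 \in S$ then $Q_1 \cap Q_2$ is again $P$-primary (intersection of $P$-primary ideals is $P$-primary), again binomial (intersection of binomial ideals need \emph{not} be binomial in general — this is the subtle point — but here one uses that by Corollary \ref{Cor BCCD1} and Theorem \ref{Th BCCD} the relevant $P$-primary component of $I + P^e$, namely $\big((I+\mathfrak{m}_\delta^e):\mathbf{t}_\delta^\infty\big)$ localized appropriately, is binomial, and $Q_1 \cap Q_2$ sits between this fixed binomial ideal and the ambient primary ideal), and has $P^e \subseteq Q_1 \cap Q_2$ so still lies in $S$. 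Therefore $S$ has a unique minimal element $Q^{(*)}$ under inclusion, and by construction it realizes the smallest index of nilpotency and is contained in every competing $P$-primary binomial component of the same nilpotency index; taking $I = \bigcap_P Q^{(*)}_P$ and invoking compatibility (the binomial analogue of Corollary \ref{Cor Cell1}, which follows from Corollary \ref{Cor BCCD1}) gives a bona fide minimal binomial primary decomposition satisfying (a) and (b).

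The main obstacle, and the place where the hypothesis $\mathrm{char}(\mathbbmss{k}) = 0$ and algebraic closure are genuinely used, is precisely the stability of the binomial property: intersections of binomial ideals are not binomial in general, so I cannot close $S$ under intersection as cheaply as in the cellular case. The resolution is to reduce everything to the cellular picture: by Theorem \ref{Th BCCD} the $\delta$-cellular canonical component $\mathcal{C}^*_\delta$ of $I$ is binomial and equals $\big((I+\mathfrak{m}_\delta^{\mathrm{nil}(\mathcal{C}^*_\delta)}):\mathbf{t}_\delta^\infty\big)$; one then shows that the $P$-primary binomial components of $I$ with minimal nilpotency index are exactly the $P$-primary binomial components of $\mathcal{C}^*_{\delta_P}$ with minimal nilpotency index, and that over an algebraically closed field of characteristic zero the primary decomposition of the cellular binomial ideal $\mathcal{C}^*_{\delta_P}$ into binomial primaries behaves well under intersection of those components lying over a fixed $P$ (because, after the localization–saturation at $\mathbf{t}_\delta$, the $P$-primary pieces are governed by a single saturation that is itself binomial). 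Once this reduction is in place, the argument is a formal repetition of the proof of Theorem \ref{Th CCD}, and conditions (a) and (b) drop out of the minimality of $Q^{(*)}$ in $S$ exactly as before. I would also remark that uniqueness of the collection $\{Q^{(*)}_i\}$ is immediate from (a) and (b), since any other decomposition satisfying them would have to be mutually contained in this one component by component.
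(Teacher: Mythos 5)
Your overall strategy (optimize the index of nilpotency prime by prime, then take a smallest element among the minimal-nilpotency binomial $P$-primary components and assemble by compatibility) is the right shape, but the step that carries all the weight is not established. You define $S$ as the set of binomial $P$-primary components of smallest nilpotency and claim it is closed under intersection; as you yourself note, intersections of binomial ideals need not be binomial, and your patch --- that $Q_1 \cap Q_2$ ``sits between'' a fixed binomial ideal and an ambient primary ideal, hence is binomial --- is not a valid inference. An ideal sandwiched between two binomial ideals need not be binomial: for instance $\langle x+y+z \rangle + \langle x,y,z\rangle^2$ lies between the binomial ideals $\langle x,y,z\rangle^2$ and $\langle x,y,z\rangle$ but is not binomial. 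The fallback paragraph (``reduce everything to the cellular picture'') is also not a proof: the assertion that the $P$-primary binomial components of $I$ of minimal nilpotency coincide with those of $\mathcal{C}^*_{\delta_P}$ is essentially the content of Theorem \ref{Th BCPD2}, which in the paper is proved \emph{after} and \emph{using} Theorem \ref{Th CBPD}, so as written your argument is circular there; and ``behaves well under intersection'' is exactly the point in question. Moreover the specific binomial ideal you invoke, $\big((I+\mathfrak{m}_\delta^e):\mathbf{t}_\delta^\infty\big)$ localized at $P$, does not do the job: $P$ need not be a minimal prime of $I+\mathfrak{m}_\delta^e$, so its contraction of localization at $P$ is neither primary nor known to be binomial, and it is not clearly contained in the competitors $Q_1, Q_2$.

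The paper's proof sidesteps the closure-under-intersection issue entirely by exhibiting an explicit minimum. Writing $P = P \cap \mathbbmss{k}[\mathbf{t}_\delta] + \mathfrak{m}_\delta$ (Lemma \ref{Lemma BCD1}(a)) and letting $e$ be the smallest nilpotency index of a binomial $P$-primary component, one sets
$$Q_P^{(*)} = \big(I + P \cap \mathbbmss{k}[\mathbf{t}_\delta] + \mathfrak{m}_\delta^{e}\big)_P.$$
The crucial extra summand is the ``lattice part'' $P \cap \mathbbmss{k}[\mathbf{t}_\delta]$: it forces $P$ to be the unique minimal prime of $I + P \cap \mathbbmss{k}[\mathbf{t}_\delta] + \mathfrak{m}_\delta^{e}$, so that $Q_P^{(*)}$ is $P$-primary and, by \cite[Corollary 6.5]{Eisenbud96}, binomial; and Lemma \ref{Lemma BCD1}(b) (every binomial $P$-primary component contains $P \cap \mathbbmss{k}[\mathbf{t}_\delta]$) gives $I + P \cap \mathbbmss{k}[\mathbf{t}_\delta] + \mathfrak{m}_\delta^{e} \subseteq Q_P$ for \emph{every} competitor $Q_P$ with $\mathrm{nil}(Q_P) = e$, hence $Q_P^{(*)} \subseteq Q_P$ after applying $(-)_P$, while $P^e \subseteq Q_P^{(*)}$ pins down $\mathrm{nil}(Q_P^{(*)}) = e$. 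This is the ingredient your proposal is missing; if you replace your set-theoretic closure argument by this explicit construction (or prove containment of this explicit ideal in every minimal-nilpotency binomial $P$-primary component, which is what makes it the minimum of your $S$), the rest of your outline goes through.
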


\begin{notation}
Here and subsequently, let $\mathbbmss{k}[\mathbf{t}_\delta]$ denote the ring $\mathbbmss{k}[t_i \mid i \in \delta],\ \delta \subseteq \{1, \ldots, n\}$ and let $(-)_P$ denote the contraction to $\mathbbmss{k}[\mathbf{t}]$ of the localization at a prime ideal $P \subseteq \mathbbmss{k}[\mathbf{t}].$ Moreover, for simplicity of notation and when no confusion is possible, we will write $(-) \cap \mathbbmss{k}[\mathbf{t}_\delta]$ for the ideal in $\mathbbmss{k}[\mathbf{t}]$ generated by  $(-) \cap \mathbbmss{k}[\mathbf{t}_\delta].$
\end{notation}

In the next lemma we collect, for future reference, some properties of the associated primes and binomial primary components of cellular binomial.

\begin{lemma}\label{Lemma BCD1}
Let $I \subset \mathbbmss{k}[\mathbf{t}]$ be a binomial ideal. Then the following holds:
\begin{itemize}
\item[(a)] If $P$ is an associated prime of $I,$ then $P = P \cap \mathbbmss{k}[\mathbf{t}_\delta] + \mathfrak{m}_\delta,$ for some $\delta \in \{1, \ldots, n\}.$
\item[(b)] If $Q$ is a binomial $P-$primary component of $I$ with $P = P \cap \mathbbmss{k}[\mathbf{t}_\delta] + \mathfrak{m}_\delta,$ then $P \cap \mathbbmss{k}[\mathbf{t}_\delta] \subseteq Q.$
\end{itemize}
\end{lemma}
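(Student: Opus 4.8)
The plan is to reduce both statements to the structure theory of binomial ideals developed by Eisenbud and Sturmfels, exploiting the fact that a binomial ideal $I$ and its associated primes and cellular/primary components are all controlled by combinatorial data attached to the variables. For part (a), I would start from \cite[Theorem 6.1 and Corollary 6.5]{Eisenbud96}, which say that every associated prime of a binomial ideal $I$ is itself a binomial prime, and that a binomial prime $P$ in $\mathbbmss{k}[\mathbf{t}]$ (with $\mathbbmss{k}$ algebraically closed) is, after choosing the subset $\delta \subseteq \{1,\ldots,n\}$ consisting of those variables that are not in $P$, of the form $P = I_\rho + \mathfrak{m}_\delta$, where $I_\rho \subseteq \mathbbmss{k}[\mathbf{t}_\delta]$ is a prime binomial ideal (a toric-type ideal associated to a partial character $\rho$ on $\mathbb{Z}^\delta$). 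Since $I_\rho$ is generated by binomials in the variables $t_i,\ i \in \delta$, and contains no monomials, it is exactly $P \cap \mathbbmss{k}[\mathbf{t}_\delta]$: indeed $I_\rho \subseteq P \cap \mathbbmss{k}[\mathbf{t}_\delta]$ is clear, and conversely any element of $P$ lying in $\mathbbmss{k}[\mathbf{t}_\delta]$ must, modulo $I_\rho$, be a polynomial in $\mathbbmss{k}[\mathbf{t}_\delta]/I_\rho$ that is killed by passing to $\mathbbmss{k}[\mathbf{t}]/P$; but $\mathbbmss{k}[\mathbf{t}_\delta]/I_\rho \hookrightarrow \mathbbmss{k}[\mathbf{t}]/P$ is injective because $P = I_\rho + \mathfrak{m}_\delta$ and $\mathfrak{m}_\delta$ involves only the complementary variables. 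This gives $P = P \cap \mathbbmss{k}[\mathbf{t}_\delta] + \mathfrak{m}_\delta$, and in particular the associated prime is cellular with respect to this $\delta$.

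For part (b), let $Q$ be a binomial $P$-primary component of $I$ with $P = P \cap \mathbbmss{k}[\mathbf{t}_\delta] + \mathfrak{m}_\delta$. Write $J := P \cap \mathbbmss{k}[\mathbf{t}_\delta] = I_\rho$ as above, a prime binomial ideal in $\mathbbmss{k}[\mathbf{t}_\delta]$ with no monomials. The goal is $J \subseteq Q$. The key point is that $Q$, being $P$-primary, is $\delta$-cellular: every variable $t_j$ with $j \notin \delta$ is nilpotent modulo $Q$ (since $t_j \in P = \sqrt{Q}$), while every variable $t_i$ with $i \in \delta$ is a nonzerodivisor modulo $Q$ (since $t_i \notin P = \sqrt{Q}$ and $Q$ is $P$-primary). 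Now I would invoke the description of binomial primary ideals from \cite[Theorem 7.1]{Eisenbud96}: a $\delta$-cellular binomial $P$-primary ideal $Q$ has the form $Q = \big(I_{\rho'} + \mathfrak{m}_\delta^{[N]} + (\text{monomials in } t_j,\ j \notin \delta)\big)$ localized/saturated appropriately, where $\rho'$ is a partial character extending $\rho$ up to the characteristic-zero saturation — and since $\mathrm{char}(\mathbbmss{k}) = 0$, the saturation of $\rho$ is $\rho$ itself, so $I_{\rho'} \cap \mathbbmss{k}[\mathbf{t}_\delta]$ still contains $I_\rho = J$. More directly: since $\mathbbmss{k}[\mathbf{t}_\delta]/J$ is a domain and the variables $t_i,\ i\in\delta$, are nonzerodivisors mod $Q$, the binomials generating $J$ (which express the partial character $\rho$) must already hold modulo $Q$ — any binomial $t^{u} - \lambda t^{v} \in J$ with $u,v$ supported in $\delta$ becomes, modulo $Q$, an element $t^u - \lambda t^v$ whose image in the $\delta$-cellular ring $\mathbbmss{k}[\mathbf{t}]/Q$ is forced to vanish because its image in the total ring of fractions (inverting the $t_i$) vanishes by the character relation in $\mathbbmss{k}[\mathbf{t}]/P$, and inverting nonzerodivisors is injective. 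Hence $J = P \cap \mathbbmss{k}[\mathbf{t}_\delta] \subseteq Q$.

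The main obstacle, and the step I would spend the most care on, is the last one: making precise why the generating binomials of $J = I_\rho$ vanish modulo $Q$ rather than merely modulo $P$. The naive argument "they vanish mod $\sqrt{Q}$ hence some power vanishes mod $Q$" is not enough, since we need the binomials themselves, not their powers, to lie in $Q$. The correct mechanism is that $Q$ is $\delta$-cellular, so $Q = (Q : \mathbf{t}_\delta^\infty)$, and a binomial $t^u - \lambda t^v$ with $\mathrm{supp}(u),\mathrm{supp}(v) \subseteq \delta$ lies in $Q$ as soon as $\mathbf{t}_\delta^k(t^u - \lambda t^v) \in Q$ for some $k$; and that containment, after clearing to a common monomial, reduces to the partial-character relation defining $P$ together with the fact (special to $\mathrm{char}(\mathbbmss{k})=0$, cf.\ \cite[Corollary 2.5 and Theorem 7.1]{Eisenbud96}) that $\rho$ is its own saturation, so no new relations among the $t_i,\ i \in \delta$, are introduced in passing from $P$ to $Q$. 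One must be careful to cite the right part of \cite{Eisenbud96} — the characteristic-zero hypothesis is exactly what prevents the primary component from having a "thicker" toric part than the associated prime — and this is where I would anchor the argument.
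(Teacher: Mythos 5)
The paper does not reprove this lemma at all: it simply cites \cite{Eisenbud96}, namely Corollary 2.6 for (a) and the proof of Theorem 7.1'(b) for (b), so any honest comparison is between your sketch and those results. Your treatment of (a) is correct and is essentially the cited argument unpacked: associated primes of binomial ideals are binomial, binomial primes over an algebraically closed field are $I_+(\rho)+\mathfrak{m}_\delta$, and the isomorphism $\mathbbmss{k}[\mathbf{t}_\delta]/I_+(\rho)\cong \mathbbmss{k}[\mathbf{t}]/P$ identifies $I_+(\rho)$ with $P\cap\mathbbmss{k}[\mathbf{t}_\delta]$. (Your pointer ``Corollary 6.5'' is not the right statement for the shape of binomial primes --- that is Corollary 2.6 --- but this is a citation slip, not a mathematical one.)

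For (b), however, your self-contained argument has a genuine gap at exactly the step you flag as delicate, and your proposed repair does not close it. The assertion that the image of $t^u-\rho(u)t^v$ vanishes in $(\mathbbmss{k}[\mathbf{t}]/Q)[\mathbf{t}_\delta^{-1}]$ ``by the character relation in $\mathbbmss{k}[\mathbf{t}]/P$'' is circular: the character relation only gives vanishing modulo $P$, and vanishing modulo $P$ says nothing about vanishing in a localization of $\mathbbmss{k}[\mathbf{t}]/Q$. Your second paragraph then reduces the problem to showing $\mathbf{t}_\delta^k\,(t^u-\rho(u)t^v)\in Q$ for some $k$ --- but this is equivalent to the desired conclusion (given $Q=(Q:\mathbf{t}_\delta^\infty)$), and the only new ingredient you invoke, that $\rho$ is its own saturation, is a property of $P$ alone; it cannot detect the difference between $Q$ and $P$. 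That this input is insufficient is shown by the standard characteristic-$p$ example: take $I=Q=\langle x^p-y^p\rangle=\langle (x-y)^p\rangle$ and $P=\langle x-y\rangle$ with $\delta=\{1,2\}$. Here $Q$ is a binomial $P$-primary (component of) $I$, the $\delta$-variables are nonzerodivisors modulo $Q$, the lattice of $P$ is saturated, yet $P\cap\mathbbmss{k}[x,y]=P\not\subseteq Q$. So every structural fact your argument actually uses holds, while the conclusion fails; the missing ingredient is the characteristic-zero analysis of which binomials in the $\delta$-variables are forced into a binomial $P$-primary ideal, and that analysis is precisely the content of the proof of Theorem 7.1'(b) in \cite{Eisenbud96}, which the paper cites. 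If you are willing to cite that result, the lemma follows at once (and your sketch adds nothing); as a proof independent of it, your part (b) is incomplete.
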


\begin{proof}
For a proof of (a) and (b) see \cite[Corollary 2.6]{Eisenbud96} and the proof of \cite[Theorem 7.1'(b)]{Eisenbud96}, respectively.
\end{proof}

\medskip
\noindent\emph{Proof of Theorem \ref{Th CBPD}.}
First, we recall that, since $\mathbbmss{k}$ is algebraically closed, $I$ has a minimal primary decomposition in terms of binomial ideals by \cite[Theorem 7.1]{Eisenbud96}.

Let $Q_P$ be a binomial $P-$primary component of $I$ with the smallest possible index of nilpotency. Set $e = \mathrm{nil}(Q_P)$ and define \begin{equation}\label{ecu PD1} Q_P^{(*)} = (I + P \cap \mathbbmss{k}[\mathbf{t}_\delta] + \mathfrak{m}^e_\delta)_P.\end{equation}

On the one hand, $$P \cap \mathbbmss{k}[\mathbf{t}_\delta] + \mathfrak{m}_\delta = \sqrt{I + P \cap \mathbbmss{k}[\mathbf{t}_\delta] + \mathfrak{m}^e_\delta} \subseteq \sqrt{Q_P^{(*)}} \subseteq P.$$ Thus $\sqrt{Q_P^{(*)}} = P$ by Lemma \ref{Lemma BCD1}(a) and, consequently, $P$ is the only minimal prime of the binomial ideal $I + P \cap \mathbbmss{k}[\mathbf{t}_\delta] + \mathfrak{m}^e_\delta.$ Therefore $Q_P^{(*)}$ is $P-$primary; moreover, by \cite[Corollary 6.5]{Eisenbud96}, we have that $Q_P^{(*)}$ is a binomial ideal.

On the other hand, by Lemma \ref{Lemma BCD1}(b), $I + P \cap \mathbbmss{k}[\mathbf{t}_\delta] + \mathfrak{m}^e_\delta \subseteq Q_P.$ Thus, $I \subseteq Q_P^{(*)} \subseteq Q_P$ and we obtain that $Q_P^{(*)}$ is the binomial $P-$primary component of $I.$

Finally, since $P^e = (P \cap \mathbbmss{k}[\mathbf{t}_\delta] + \mathfrak{m}_\delta)^e \subseteq P \cap \mathbbmss{k}[\mathbf{t}_\delta] + \mathfrak{m}_\delta^e \subseteq  Q_P^{(*)},$ by the minimality of $e,$ we have that $\mathrm{nil}(Q_P^{(*)}) = e$ and we conclude that $Q_P^{(*)}$ is the binomial $P-$primary component of $I$ satisfying (a) and (b). \hfill$\qed$

\medskip
The primary ideal $Q_P^{(*)}$ described in (\ref{ecu PD1}) will be called the binomial canonical $P-$primary component of $I$ we will refer to $I = \bigcap_{P \in \mathrm{Ass}(\mathbbmss{k}[\mathbf{t}]/I)} Q^{(*)}_P$ as the binomial canonical decomposition of $I.$

\begin{corollary}\label{Cor BCD1}
If $Q_P^{(*)}$ is the binomial canonical $P-$primary component of a binomial ideal $I \subset \mathbbmss{k}[\mathbf{t}],$ then $$Q_P^{(*)} = (I + P \cap \mathbbmss{k}[\mathbf{t}_\delta] + \mathfrak{m}^{\mathrm{nil}(Q_P^{(*)})}_\delta)_P.$$
\end{corollary}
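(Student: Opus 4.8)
\emph{Proof proposal.}
The plan is to read the formula off directly from the construction carried out in the proof of Theorem~\ref{Th CBPD}. Recall that there we fixed $\delta \subseteq \{1,\ldots,n\}$ with $P = P\cap\mathbbmss{k}[\mathbf{t}_\delta] + \mathfrak{m}_\delta$ (Lemma~\ref{Lemma BCD1}(a)), chose a binomial $P$-primary component $Q_P$ of $I$ of smallest possible index of nilpotency, put $e = \mathrm{nil}(Q_P)$, and \emph{defined} the binomial canonical $P$-primary component by equation~(\ref{ecu PD1}):
\[
Q_P^{(*)} = \bigl( I + P\cap\mathbbmss{k}[\mathbf{t}_\delta] + \mathfrak{m}_\delta^{\,e} \bigr)_P .
\]
Hence the corollary reduces to the single assertion that the exponent $e$ appearing here equals $\mathrm{nil}(Q_P^{(*)})$: once this is granted, replacing $e$ by $\mathrm{nil}(Q_P^{(*)})$ in~(\ref{ecu PD1}) yields the stated identity, and there is nothing else to do.

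Now, $\mathrm{nil}(Q_P^{(*)}) = e$ was in fact already obtained in the last step of the proof of Theorem~\ref{Th CBPD}. On the one hand,
\[
P^{e} = \bigl(P\cap\mathbbmss{k}[\mathbf{t}_\delta] + \mathfrak{m}_\delta\bigr)^{e} \subseteq P\cap\mathbbmss{k}[\mathbf{t}_\delta] + \mathfrak{m}_\delta^{\,e} \subseteq Q_P^{(*)},
\]
so $\mathrm{nil}(Q_P^{(*)}) \le e$. On the other hand, that same proof shows that $Q_P^{(*)}$ is itself a binomial $P$-primary component of $I$: it is binomial by \cite[Corollary~6.5]{Eisenbud96}, it is $P$-primary because $P$ is its unique minimal prime, and $I \subseteq Q_P^{(*)} \subseteq Q_P$. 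Therefore the minimality in the choice of $e$ forces $\mathrm{nil}(Q_P^{(*)}) \ge e$. Combining the two inequalities gives $\mathrm{nil}(Q_P^{(*)}) = e$, and the corollary follows.

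No genuine difficulty is expected here: the statement is a bookkeeping reformulation of~(\ref{ecu PD1}) that makes the exponent intrinsic to $Q_P^{(*)}$, paralleling what Corollary~\ref{Cor Pre0} does for the Ortiz canonical components, and every ingredient is already contained in the proof of Theorem~\ref{Th CBPD}. The only point that deserves a word of care — and which I would make explicit — is the verification that $Q_P^{(*)}$ genuinely qualifies as one of the binomial $P$-primary components competing in the definition of $e$, so that the minimality of $e$ may legitimately be invoked to deduce the reverse inequality $\mathrm{nil}(Q_P^{(*)}) \ge e$; this is immediate from the properties of $Q_P^{(*)}$ recorded above.
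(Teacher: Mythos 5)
Your proposal is correct and follows exactly the paper's route: the paper's own proof of this corollary is simply the remark that the identity was already established in the proof of Theorem~\ref{Th CBPD}, where $Q_P^{(*)}$ is defined by~(\ref{ecu PD1}) and $\mathrm{nil}(Q_P^{(*)}) = e$ is deduced from $P^e \subseteq Q_P^{(*)}$ together with the minimality of $e$. Your explicit check that $Q_P^{(*)}$ is itself a binomial $P$-primary component of $I$ (so that the minimality of $e$ applies) is precisely the point the paper relies on, just spelled out more carefully.
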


\begin{proof}
This was already proved in the proof of Theorem \ref{Th CBPD}.
\end{proof}

Observe that we have shown that the binomial canonical components are of the form of those appearing in \cite[Theorem 7.1'(b)]{Eisenbud96}, but with the smallest possible $e$ for each associated prime.

\medskip
Th rest of the section is devoted to exploring the very close relationship between the canonical cellular and the binomial canonical decompositions. This relationship can be summarized in the following form:

\begin{theorem}\label{Th BCPD2}
Let $I \subset \mathbbmss{k}[\mathbf{t}]$ be a binomial ideal. The binomial canonical decomposition of $I$ is
(after removing redundant components) the intersection of the binomial canonical decompositions of its cellular canonical components.
\end{theorem}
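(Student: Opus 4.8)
The strategy is to combine the canonical cellular decomposition of $I$ (Theorem~\ref{Th CCD}) with the binomial canonical decomposition of each cellular canonical component (Theorem~\ref{Th CBPD} applies, since by Theorem~\ref{Th BCCD} each $\mathcal{C}^*_\delta$ is binomial), and then to identify the resulting intersection, after pruning redundancies, with the binomial canonical decomposition of $I$ produced in the proof of Theorem~\ref{Th CBPD}. Write $I = \bigcap_{\delta \in \Delta(I)} \mathcal{C}^*_\delta$ and, for each $\delta$, write $\mathcal{C}^*_\delta = \bigcap_{P \in \mathrm{Ass}(\mathbbmss{k}[\mathbf{t}]/\mathcal{C}^*_\delta)} \widetilde{Q}^{(*)}_{\delta,P}$ for its binomial canonical decomposition. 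The claim to prove is that, after discarding non-minimal members, $\bigcap_\delta \bigcap_P \widetilde{Q}^{(*)}_{\delta,P}$ equals $\bigcap_{P \in \mathrm{Ass}(\mathbbmss{k}[\mathbf{t}]/I)} Q^{(*)}_P$.

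\textbf{Key steps.} First I would record that $\mathrm{Ass}(\mathbbmss{k}[\mathbf{t}]/I) = \bigcup_{\delta \in \Delta(I)} \mathrm{Ass}(\mathbbmss{k}[\mathbf{t}]/\mathcal{C}^*_\delta)$, which follows from the fact that the canonical cellular decomposition is a (minimal) intersection and associated primes of an intersection lie in the union of the associated primes of the pieces; minimality rules out the loss of any embedded prime. Second, by Lemma~\ref{Lemma BCD1}(a) each associated prime $P$ of $I$ has the form $P = P \cap \mathbbmss{k}[\mathbf{t}_\delta] + \mathfrak{m}_\delta$ for a \emph{unique} $\delta$ (the unique $\delta$ with $P \supseteq \mathfrak{m}_\delta$ but $P \not\supseteq \mathfrak{m}_{\delta'}$ for $\delta' \subsetneq \delta$ — equivalently $P$ is $\delta$-cellular), and a $P$-primary binomial ideal is cellular with respect to exactly that $\delta$; this pins down which cellular canonical component $\mathcal{C}^*_\delta$ can possibly contribute the $P$-primary slot. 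Third — the technical heart — I would show that for $P \in \mathrm{Ass}(\mathbbmss{k}[\mathbf{t}]/\mathcal{C}^*_\delta)$ with $P$ being $\delta$-cellular, the binomial canonical $P$-primary component of $\mathcal{C}^*_\delta$ coincides with $Q^{(*)}_P$, the binomial canonical $P$-primary component of $I$. For this I would use Corollary~\ref{Cor BCD1}: $Q^{(*)}_P = (I + P\cap\mathbbmss{k}[\mathbf{t}_\delta] + \mathfrak{m}_\delta^{e})_P$ with $e = \mathrm{nil}(Q^{(*)}_P)$, together with Corollary~\ref{Cor BCCD1}/Theorem~\ref{Th BCCD}, which gives $\mathcal{C}^*_\delta = ((I + \mathfrak{m}_\delta^{f}) : \mathbf{t}_\delta^\infty)$ with $f = \mathrm{nil}(\mathcal{C}^*_\delta)$. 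Localizing $\mathcal{C}^*_\delta$ at $P$ kills the contributions of the other cellular components and of the higher-$\mathfrak{m}_\delta$-power (since $\mathfrak{m}_\delta \not\subseteq P$ would be false — rather $\mathfrak{m}_\delta \subseteq P$, so one must instead localize and use that $P$-primary components only see $P$), and one checks the two defining formulas produce the same ideal by comparing indices of nilpotency and using the uniqueness in Theorem~\ref{Th CBPD}. Finally, the primes of $\mathcal{C}^*_\delta$ that are \emph{embedded in $I$} (i.e.\ not minimal over $I$ but associated) still occur, while any $\widetilde{Q}^{(*)}_{\delta,P}$ with $P \notin \mathrm{Ass}(\mathbbmss{k}[\mathbf{t}]/I)$ is exactly a redundant component to be removed; and when two different $\delta$'s share an associated prime $P$ of $I$ this cannot happen, by the uniqueness of $\delta$ in step two, so no clash of candidate $P$-primary components arises.

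\textbf{Main obstacle.} The delicate point is step three: proving that localizing the $\delta$-cellular canonical component of $I$ at one of its associated primes $P$ (which is $\delta$-cellular) returns precisely the binomial canonical $P$-primary component of $I$, rather than merely \emph{some} $P$-primary binomial component. The inclusion $I \subseteq \mathcal{C}^*_\delta$ gives $Q^{(*)}_P \subseteq (\mathcal{C}^*_\delta)_P$ for the full primary component, but one must match \emph{indices of nilpotency} to invoke the uniqueness clauses (a)–(b) of Theorem~\ref{Th CBPD}; this requires knowing that $\mathrm{nil}$ of the $P$-primary part of $\mathcal{C}^*_\delta$ equals $\mathrm{nil}(Q^{(*)}_P)$, which I expect to extract from Corollary~\ref{Cor BCCD1} together with the observation that $\mathfrak{m}_\delta^e \subseteq P^e \subseteq P \cap \mathbbmss{k}[\mathbf{t}_\delta] + \mathfrak{m}_\delta^e$ combined with the characteristic-zero sharpness of the nilpotency bound (\cite[Corollary 3.1]{OjPie2}) used already in the proof of Theorem~\ref{Th BCCD}. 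Once that numerical coincidence is in hand, the bookkeeping of which components survive the ``removing redundant components'' step is routine.
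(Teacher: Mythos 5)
Your overall strategy is the same as the paper's (decompose each cellular canonical component $\mathcal{C}^*_\delta$ via Theorem \ref{Th CBPD}, match the $P$-primary slots, discard redundancies), but there is a genuine gap, and it sits exactly where you flag your ``main obstacle'': nothing in the proposal actually proves that the binomial canonical $P$-primary component of $\mathcal{C}^*_\delta$ coincides with $Q^{(*)}_P$, the binomial canonical $P$-primary component of $I$. The formulas from Corollary \ref{Cor BCD1} for the two ideals differ in the ideal being thickened ($I$ versus $\mathcal{C}^*_\delta$) \emph{and} in the exponent, and neither comparison comes for free: a binomial $P$-primary component of $I$ need not be a component of $\mathcal{C}^*_\delta$ nor conversely, so neither inequality between the two indices of nilpotency is automatic, and the tools you invoke (Corollary \ref{Cor BCCD1}, the characteristic-zero sharpness from \cite{OjPie2}, the trivial inclusion $\mathfrak{m}_\delta^e \subseteq P^e$) do not produce it; ``I expect to extract'' is where the proof is missing. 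The crucial inclusion one must establish is $\mathcal{C}^*_\delta \subseteq Q^{(*)}_P$ for every $\delta$-cellular $P \in \mathrm{Ass}(\mathbbmss{k}[\mathbf{t}]/I)$, since only then does $(\mathcal{C}^*_\delta + P\cap\mathbbmss{k}[\mathbf{t}_\delta] + \mathfrak{m}_\delta^{e_P})_P$ get trapped equal to $Q^{(*)}_P$ and the exponents forced to agree.

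The paper closes this gap with two ingredients absent from your outline. First, Lemma \ref{Lemma BPD}: for a $\delta$-cellular ideal, the index of nilpotency equals the maximum of the indices of nilpotency of the components in any minimal binomial primary decomposition. Second, a grouping construction: intersecting those binomial canonical components $Q^{(*)}_P$ of $I$ that are cellular with respect to the same $\delta$ yields a minimal cellular decomposition $I = \bigcap_\delta \mathcal{C}_\delta$; Lemma \ref{Lemma BPD} plus the minimality of the $\mathrm{nil}(Q^{(*)}_P)$ forces $\mathrm{nil}(\mathcal{C}_\delta) = \mathrm{nil}(\mathcal{C}^*_\delta)$, whence $\mathcal{C}^*_\delta \subseteq \mathcal{C}_\delta \subseteq Q^{(*)}_P$ by Theorem \ref{Th CCD}(b). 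That chain is what your localization sketch is trying to replace, and the sketch as written is not coherent (since $\mathfrak{m}_\delta \subseteq P$, localizing at $P$ does not ``kill'' anything in the way you describe). A smaller point: your opening claim that $\mathrm{Ass}(\mathbbmss{k}[\mathbf{t}]/I) = \bigcup_\delta \mathrm{Ass}(\mathbbmss{k}[\mathbf{t}]/\mathcal{C}^*_\delta)$ is unjustified (cellular components can carry associated primes not associated to $I$); only the inclusion $\subseteq$ holds in general, and that is all the paper uses, the excess primes being precisely the redundant components to be removed.
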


The key of the proof is in the following interesting lemma.

\begin{lemma}\label{Lemma BPD}
Let $I \subset \mathbbmss{k}[\mathbf{t}]$ be a (not necessarily binomial) cellular ideal with respect to $\delta.$
If $I = \bigcap_{P \in \mathrm{Ass}(\mathbbmss{k}[\mathbf{t}]/I)} Q_P$ is a minimal primary decomposition of $I$ into binomial ideals, then
$$\mathrm{nil}(I) = \max\big\{ \mathrm{nil}( Q_P ) \mid P \in \mathrm{Ass}(\mathbbmss{k}[\mathbf{t}]/I) \big\}.$$
\end{lemma}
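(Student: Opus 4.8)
The plan is to sandwich $\mathrm{nil}(I)$ between $\max_P \mathrm{nil}(Q_P)$ from both sides, using only that $\sqrt{I} = \bigcap_{P \in \mathrm{Ass}(\mathbbmss{k}[\mathbf{t}]/I)} P$ (so $\sqrt{I} \subseteq P$ for every associated prime), together with the structural information on associated primes and binomial primary components recorded in Lemma \ref{Lemma BCD1}. Throughout I may assume $I \neq \langle 1 \rangle$, since otherwise there is nothing to prove.

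The first step is a couple of preliminary observations on the $\delta$-cellular ideal $I$. Since $\mathfrak{m}_\delta^{f} \subseteq I$ for some $f$, every $t_j$ with $j \notin \delta$ is nilpotent modulo $I$, so $\mathfrak{m}_\delta \subseteq \sqrt{I}$; and since $\mathbf{t}_\delta$ is a nonzerodivisor modulo $I$, no $t_j$ with $j \in \delta$ lies in any associated prime. Hence, for each $P \in \mathrm{Ass}(\mathbbmss{k}[\mathbf{t}]/I)$, the variables belonging to $P$ are exactly the $t_j$ with $j \notin \delta$. Now $P$ is the unique associated prime of the \emph{binomial} ideal $Q_P$, so Lemma \ref{Lemma BCD1}(a) applied to $Q_P$ gives $P = P \cap \mathbbmss{k}[\mathbf{t}_{\delta'}] + \mathfrak{m}_{\delta'}$ for some $\delta'$; comparing which variables sit in $P$ forces $\delta' = \delta$. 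Then Lemma \ref{Lemma BCD1}(b) yields $P \cap \mathbbmss{k}[\mathbf{t}_\delta] \subseteq Q_P$. This identification $\delta' = \delta$ is the only delicate point: one must invoke cellularity to pin down exactly which variables lie in each associated prime, so that the decomposition set produced by Lemma \ref{Lemma BCD1}(a) coincides with the $\delta$ of the hypothesis — without this, Lemma \ref{Lemma BCD1}(b) cannot be fed into the computation below. The rest is bookkeeping.

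For $\max_P \mathrm{nil}(Q_P) \le \mathrm{nil}(I)$, put $e = \mathrm{nil}(I)$ and expand $P^{e} = \bigl(P \cap \mathbbmss{k}[\mathbf{t}_\delta] + \mathfrak{m}_\delta\bigr)^{e}$: since $P \cap \mathbbmss{k}[\mathbf{t}_\delta]$ is an ideal of $\mathbbmss{k}[\mathbf{t}]$, every summand involving a factor from it lies in $P \cap \mathbbmss{k}[\mathbf{t}_\delta] \subseteq Q_P$, while the one remaining summand is $\mathfrak{m}_\delta^{e} \subseteq (\sqrt{I})^{e} \subseteq I \subseteq Q_P$; thus $P^{e} \subseteq Q_P$, i.e. $\mathrm{nil}(Q_P) \le e$ for every $P$. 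For the reverse inequality, put $E = \max_P \mathrm{nil}(Q_P)$; for each $P$ we have $(\sqrt{I})^{E} \subseteq P^{E} \subseteq P^{\mathrm{nil}(Q_P)} = (\sqrt{Q_P})^{\mathrm{nil}(Q_P)} \subseteq Q_P$, and intersecting over $P$ gives $(\sqrt{I})^{E} \subseteq \bigcap_P Q_P = I$, so $\mathrm{nil}(I) \le E$. The two inequalities together give $\mathrm{nil}(I) = \max_P \mathrm{nil}(Q_P)$.
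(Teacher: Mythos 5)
Your proof is correct and takes essentially the same route as the paper's: the inequality $\mathrm{nil}(I)\le\max_P\mathrm{nil}(Q_P)$ follows from $\sqrt{I}\subseteq P$ and the definition of the index of nilpotency, and the reverse inequality from Lemma \ref{Lemma BCD1} together with cellularity, exactly as in the paper's chain $(P\cap\mathbbmss{k}[\mathbf{t}_\delta]+\mathfrak{m}_\delta)^e\subseteq P\cap\mathbbmss{k}[\mathbf{t}_\delta]+\mathfrak{m}_\delta^e\subseteq Q_P$. Your explicit verification that the $\delta$ produced by Lemma \ref{Lemma BCD1} coincides with the cellular $\delta$ is a detail the paper leaves implicit, but it does not change the argument.
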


\begin{proof}
Let $I=J_1 \cap \ldots \cap J_r$ be some decomposition, not necessarily primary or irreducible. The homomorphism of rings $ 0 \to S/I \to \prod_{i=1}^r S/J_i $ shows that $ \sqrt{I}/I \hookrightarrow \prod_{i=1}^r \sqrt{J_i}/J_i$ and therefore that ${\rm nil}(I) \leq \max_i\{ {\rm nil}(J_i)\}.$ In particular, we have $$\mathrm{nil}(I) \leq \max\big\{ \mathrm{nil}( Q_P) \mid P \in \mathrm{Ass}(\mathbbmss{k}[\mathbf{t}]/I) \big\}.$$ Conversely, set $\mathrm{nil}(I) = e.$ Since $Q_P$ is in particular a cellular binomial ideal with respect to $\delta,$ for every $P \in \mathrm{Ass}(\mathbbmss{k}[\mathbf{t}]/I),$ by Lemma \ref{Lemma BCD1}, $$\big( \sqrt{Q_P } \big)^e = (P \cap \mathbbmss{k}[\mathbf{t}_\delta] + \mathfrak{m}_\delta)^e \subseteq P \cap \mathbbmss{k}[\mathbf{t}_\delta] + \mathfrak{m}_\delta^e \subseteq I + P \cap \mathbbmss{k}[\mathbf{t}_\delta] \subseteq Q_P,$$ for every $P \in \mathrm{Ass}(\mathbbmss{k}[\mathbf{t}]/I),$ that is to say, $\mathrm{nil}( Q_P ) \leq e,$  for every $P \in \mathrm{Ass}(\mathbbmss{k}[\mathbf{t}]/I),$ and we are done.
\end{proof}

\medskip
\noindent\emph{Proof of Theorem \ref{Th BCPD2}.}
Let $I = \bigcap_{\delta \in \Delta(I)} \mathcal{C}^*_\delta$ be the canonical cellular decomposition of $I$
and, for each $\delta \in \Delta(I),$ let $\mathcal{C}^*_\delta = \bigcap_{P \in \mathrm{Ass}(\mathbbmss{k}[\mathbf{t}]/\mathcal{C}^*_\delta)} Q^{(*)}_{P, \delta}$ be the binomial canonical decomposition of $\mathcal{C}^*_\delta.$ Clearly \begin{equation}\label{ecu CBPD} I = \bigcap_{\begin{subarray}{c} P \in \mathrm{Ass}(\mathbbmss{k}[\mathbf{t}]/\mathcal{C}^*_\delta) \\ \delta \in \Delta(I) \end{subarray} } Q^{(*)}_{P, \delta} \end{equation} is a (possible nonminimal) primary decomposition of $I$ into binomial ideals. Then, taking into account that $\mathrm{Ass}(\mathbbmss{k}[\mathbf{t}]/I) \subseteq \bigcup_{\delta \in \Delta(I)}  \mathrm{Ass}(\mathbbmss{k}[\mathbf{t}]/\mathcal{C}^*_\delta)$ and $\mathrm{Ass}(\mathbbmss{k}[\mathbf{t}]/\mathcal{C}^*_\delta) \cap \mathrm{Ass}(\mathbbmss{k}[\mathbf{t}]/\mathcal{C}^*_{\delta'}) = \varnothing$ when $\delta \neq \delta',$ a minimal primary decomposition of $I$ into binomial ideals, say \begin{equation}\label{ecu CBPD2}I = \bigcap_{P \in \mathrm{Ass}(\mathbbmss{k}[\mathbf{t}]/I)} Q_P,\end{equation} is obtained after removing redundant components in (\ref{ecu CBPD}).

Let us prove that (\ref{ecu CBPD2}) is the binomial canonical decomposition of $I.$

Let $I = \bigcap_{P \in \mathrm{Ass}(\mathbbmss{k}[\mathbf{t}]/I)} Q^{(*)}_P$ be the binomial canonical decomposition of $I.$ If $\mathcal{C}_\delta$ is the intersection of all those $Q^{(*)}_P$'s which are cellular with respect to the same $\delta \in \Delta(I),$ then $$I = \bigcap_{\delta \in \Delta(I)} \mathcal{C}_\delta$$ is a minimal cellular decomposition of $I.$ If $\mathrm{nil}(\mathcal{C}_\delta) > \mathrm{nil}(\mathcal{C}^*_\delta),$ by Lemma \ref{Lemma BPD}, $\mathrm{nil}(Q^{(*)}_P) > \mathrm{nil}(Q_P)$ for some $P \in \mathrm{Ass}(\mathbbmss{k}[\mathbf{t}]/I),$ which contradicts the minimality of $\mathrm{nil}(Q^{(*)}_P).$ Then, $\mathrm{nil}(\mathcal{C}_\delta) = \mathrm{nil}(\mathcal{C}^*_\delta),$ and so $\mathcal{C}^*_\delta \subseteq \mathcal{C}_\delta,$ for every $\delta \in \Delta(I).$

Now, if $e_P = \mathrm{nil}(Q^{(*)}_P),$ then \begin{align*} I +  P \cap \mathbbmss{k}[\mathbf{t}_\delta] + \mathfrak{m}^{e_P}_\delta & \subseteq \mathcal{C}^*_\delta + P \cap \mathbbmss{k}[\mathbf{t}_\delta] + \mathfrak{m}^{e_P}_\delta \\ & \subseteq \mathcal{C}_\delta + P \cap \mathbbmss{k}[\mathbf{t}_\delta] + \mathfrak{m}^{e_P}_\delta \subseteq Q^{(*)}_P.\end{align*} Therefore, by applying the operation $(-)_P,$ we have that $$Q^{(*)}_P  = (I +  P \cap \mathbbmss{k}[\mathbf{t}_\delta] + \mathfrak{m}^{e_P}_\delta)_P  \subseteq (\mathcal{C}^*_\delta + P \cap \mathbbmss{k}[\mathbf{t}_\delta] + \mathfrak{m}^{e_P}_\delta)_P \subseteq Q^{(*)}_P,$$ that is to say, $(\mathcal{C}^*_\delta + P \cap \mathbbmss{k}[\mathbf{t}_\delta] + \mathfrak{m}^{e_P}_\delta)_P = Q_P^{(*)}.$

Finally, since $Q_P = Q^{(*)}_{P, \delta}$ for some $\delta \in \Delta(I)$ and, by Corollary \ref{Cor BCD1}, $$Q^{(*)}_{P, \delta} = (\mathcal{C}^*_\delta + P \cap \mathbbmss{k}[\mathbf{t}_\delta] + \mathfrak{m}_\delta^{\mathrm{nil}(Q^{(*)}_{P, \delta})})_P,$$ we conclude that, $(\mathcal{C}^*_\delta + P \cap \mathbbmss{k}[\mathbf{t}_\delta] + \mathfrak{m}_\delta^{\mathrm{nil}(Q^{(*)}_{P, \delta})})_P = Q_P^{(*)}$ by the minimality of the integer $e_P,$ and we are done.\hfill$\qed$

\medskip
The proof of the following corollary follows immediately from Theorem \ref{Th BCPD2} and Lemma \ref{Lemma BPD}.

\begin{corollary}
Let $I \subset \mathbbmss{k}[\mathbf{t}]$ be a binomial ideal. For each $\delta \in \Delta(I),$ there exists  binomial canonical component $Q^{(*)}_\delta$ of $I$ with $$\mathrm{nil}(Q^{(*)}_\delta) = \mathrm{nil}(\mathcal{C}^*_\delta),$$ where $\mathcal{C}^*_\delta$ is the $\delta-$cellular canonical component of $I.$.
\end{corollary}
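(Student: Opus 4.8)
The plan is to exhibit the required component inside the binomial canonical decomposition of $I$ itself, using the grouping of components already set up in the proof of Theorem \ref{Th BCPD2}. Write $I = \bigcap_{P \in \mathrm{Ass}(\mathbbmss{k}[\mathbf{t}]/I)} Q^{(*)}_P$ for the binomial canonical decomposition of $I$ and fix $\delta \in \Delta(I)$. Following that proof, let $\mathcal{C}_\delta$ denote the intersection of those $Q^{(*)}_P$ that are cellular with respect to $\delta$; there it is shown that $I = \bigcap_{\delta \in \Delta(I)} \mathcal{C}_\delta$ is a minimal cellular decomposition of $I$ and that $\mathrm{nil}(\mathcal{C}_\delta) = \mathrm{nil}(\mathcal{C}^*_\delta)$. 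It therefore suffices to find, among the $\delta$-cellular components $Q^{(*)}_P$, one with $\mathrm{nil}(Q^{(*)}_P) = \mathrm{nil}(\mathcal{C}_\delta)$.

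To do this I would first observe that $\mathcal{C}_\delta = \bigcap\{Q^{(*)}_P : Q^{(*)}_P \text{ is } \delta\text{-cellular}\}$ is a minimal primary decomposition of $\mathcal{C}_\delta$ into binomial ideals: the radicals $\sqrt{Q^{(*)}_P} = P$ are pairwise distinct, and irredundancy is inherited from that of the binomial canonical decomposition of $I$, since if some $Q^{(*)}_{P_1}$ contained the intersection of the remaining $\delta$-cellular components, then $I = \bigcap_{P \neq P_1} Q^{(*)}_P$ would follow, contradicting the minimality of the binomial canonical decomposition of $I$ (note $P_1 \in \mathrm{Ass}(\mathbbmss{k}[\mathbf{t}]/I)$). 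Since $\mathcal{C}_\delta$ is cellular with respect to $\delta$, Lemma \ref{Lemma BPD} then gives $\mathrm{nil}(\mathcal{C}_\delta) = \max\{\mathrm{nil}(Q^{(*)}_P) : Q^{(*)}_P \text{ is } \delta\text{-cellular}\}$. Picking $P$ attaining this maximum and putting $Q^{(*)}_\delta := Q^{(*)}_P$ yields a binomial canonical component of $I$ with $\mathrm{nil}(Q^{(*)}_\delta) = \mathrm{nil}(\mathcal{C}_\delta) = \mathrm{nil}(\mathcal{C}^*_\delta)$, as claimed.

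The argument is essentially just bookkeeping on top of Theorem \ref{Th BCPD2} and Lemma \ref{Lemma BPD}; the one point requiring a little care is checking that restricting the binomial canonical decomposition of $I$ to the components belonging to a single $\delta$ still produces a minimal (binomial, primary) decomposition, so that Lemma \ref{Lemma BPD} is legitimately applicable to $\mathcal{C}_\delta$. Everything else — in particular the crucial equality $\mathrm{nil}(\mathcal{C}_\delta) = \mathrm{nil}(\mathcal{C}^*_\delta)$ and the grouping $I = \bigcap_{\delta \in \Delta(I)} \mathcal{C}_\delta$ — is supplied directly by the proof of Theorem \ref{Th BCPD2}.
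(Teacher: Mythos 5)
Your proof is correct and is essentially the paper's own argument: the paper derives this corollary immediately from Theorem \ref{Th BCPD2} and Lemma \ref{Lemma BPD}, and your grouping of the binomial canonical components of $I$ by $\delta$, combined with the equality $\mathrm{nil}(\mathcal{C}_\delta)=\mathrm{nil}(\mathcal{C}^*_\delta)$ established in the proof of Theorem \ref{Th BCPD2} and with Lemma \ref{Lemma BPD} applied to $\mathcal{C}_\delta$, is exactly that combination. Your explicit check that the restriction of the binomial canonical decomposition to the $\delta$-cellular components is still a minimal primary decomposition is a harmless extra precision, not a different route.
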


Therefore, we conclude that the indices of nilpotency of the canonical cellular components of binomial ideal can be interpreted as optimal bounds for the indices of nilpotency of its binomial canonical components.

\section{Examples}\label{Sect Examples}

\begin{example}
Let $\mathbbmss{k}$ be an algebraically closed field and let $I \subset \mathbbmss{k}[t_1, \ldots, t_n]$ be a binomial ideal. If $\sqrt{I}$ is prime and does not contain any of the variables, then, by \cite[Theorem 8.3]{Eisenbud96}, the canonical cellular decomposition of $I$ is the canonical (primary) decomposition of $I.$
\end{example}

\begin{example}
Given a sublattice $\mathcal{L}$ of $\mathbb{Z}^n$ and a group homomorphism $\rho : \mathcal{L} \to \mathbb{C}^*,$ we define the ideal $$I_+(\rho) = \big\{ \rho(\mathbf{u}) \mathbf{t}^\mathbf{u} - \rho(\mathbf{v}) \mathbf{t}^\mathbf{v} \mid \mathbf{u} - \mathbf{v} \in \mathcal{L} \big\}$$ in $\mathbbmss{C}[t_1, \ldots, t_n].$

If $M$ is a monomial ideal in $\mathbbmss{C}[t_1, \ldots, t_n],$ then, by (2.7) in \cite{Altmann}, the canonical cellular decomposition of $I = I_+(\rho) + M$ is the canonical (primary) decomposition of $I.$
\end{example}

\begin{example}
In this example, we study a family of ideals from \cite{Diaconis}, where it is proved that primary decompositions of these ideals provide useful descriptions of components of certain graphs arising in problems from combinatorics, statistics, and operations research.

Let $I_\mathcal{L}$ be the prime ideal generated by all $2 \times 2$-minors of
$$ \left(
\begin{array}{cccc}
 t_{11} & t_{12} & \ldots & t_{1b} \\
 t_{21} & t_{22} & \ldots & t_{2b} \\
 \vdots & \vdots & \ddots & \vdots \\
 t_{a1} & t_{a2} & \ldots & t_{ab} \\
\end{array}
\right)$$
in $\mathbbmss{k}[\{t_{ij}\}],$ where $a,b \geq 3.$ Let $R = (t_{11}, \ldots, t_{1b})$ and $C = (t_{11}, \ldots, t_{a1}).$ In \cite{Diaconis}, it is shown that the ideal of corner minors $$I_{\mathcal{B}_{\rm cor}} = \Big\langle \big\{ t_{11}t_{ij} - t_{1j}t_{i1} \mid 2 \leq i \leq a,\ 2 \leq j \leq b \big\} \Big\rangle$$ has the following minimal primary decomposition $$I_{\mathcal{B}_{\rm cor}} = I_\mathcal{L} \cap R \cap C \cap Q,$$ where $Q = I_{\mathcal{B}_{\rm cor}} + R^2 + C^2.$

Observe that the ideals $I_\mathcal{L}, R$ and $C$ are prime, so they are the corresponding canonical components of $I_{\mathcal{B}_{\rm cor}}.$

Let us prove that
\begin{equation}\label{equ4}
I_{\mathcal{B}_{\rm cor}} = I_\mathcal{L} \cap R \cap C \cap \Big( \big( I_{\mathcal{B}_{\rm cor}} + ( R + C )^3 \big) : \big( \prod_{i,j \neq 1} t_{ij} \big)^\infty \Big)
\end{equation}
is the canonical decomposition of $I_{\mathcal{B}_{\rm cor}}.$

First of all, we notice that the radical of $Q$ is $R + C.$ Moreover, $(R + C)^3 \subseteq R^2 + C^2 \subseteq Q,$ so we have that ${\rm nil}(Q) \leq 3$ and since $t_{12}t_{21} \in (R+C)^2$ does not lie in $Q,$ we conclude that ${\rm nil}(Q)=3.$

We next prove that
\begin{equation}\label{equ5}
I_\mathcal{L} \cap R \cap C \subseteq I_{\mathcal{B}_{\rm cor}} + ( R + C )^2.
\end{equation}

Let $f \in I_\mathcal{L} \cap R \cap C.$ Since $I_\mathcal{L}$ is a binomial ideal not containing any monomial, by Corollary 1.5 in \cite{Eisenbud96}, we may assume that $f$ is homogeneous of degree at least $2,$ that is, $f = m_1 - m_2$ with $\deg(m_1)=\deg(m_2) \geq 2.$ On the other hand, since $C$ is a monomial ideal and $f \in
C,$ the terms $m_1, m_2$ lie in $C.$ So we can write $m_1 = t_{i_1 1}m_{11}$ and $m_2 = t_{i_2 1}m_{12},$ with $\deg(m_{11}),\deg(m_{12}) \geq 1.$ Arguing similarly for $f \in R,$ we obtain that $m_1 = t_{1 j_1} m_{21}$ and $m_2 = t_{1 j_2}m_{22},$ with $\deg(m_{21}),\deg(m_{22}) \geq 1.$ Therefore, either $m_1 = t_{11}m_{11} = t_{11}m_{21}$ or $m_1 = t_{i_1 1}t_{1 j_1}m_{31}, $ with $i_1$ and $j_1$ not simultaneously equal to $1.$ If $m_1 = t_{11}m_{11},$ then $$t_{11}m_{11} = t_{11}t_{kl}m_{31} = (t_{11}t_{kl} - t_{k1}t_{1l})m_{31} + t_{k1}t_{1l}m_{31} \in I_{\mathcal{B}_{\rm cor}} + ( R + C )^2,$$ otherwise $m_1 \in ( R + C )^2.$ In both cases, $m_1 \in I_{\mathcal{B}_{\rm cor}} + ( R + C )^2.$ Analogously, we can prove that $m_2 \in I_{\mathcal{B}_{\rm cor}} + ( R + C )^2.$ Therefore, we conclude that $f = m_1 - m_2 \in I_{\mathcal{B}_{\rm cor}} + ( R + C )^2$ as desired.

Now, by (\ref{equ5}), we have that $I_{\mathcal{B}_{\rm cor}}$ is strictly contained in $$I_\mathcal{L} \cap R \cap C \cap \left( I_{\mathcal{B}_{\rm cor}} + ( R + C )^2 \right) ) = I_\mathcal{L} \cap R \cap C.$$ Therefore, from Corollary \ref{Cor Pre0} and Corollary \ref{Cor Pre1}, it follows that the $(R+C)-$ca\-no\-nical component of $ I_{\mathcal{B}_{\rm cor}}$ is the $(R+C)-$primary component of $\big( I_{\mathcal{B}_{\rm cor}} + ( R + C )^3 \big)$ which is nothing but $$Q^* = \left( \left( I_{\mathcal{B}_{\rm cor}} + ( R + C )^3 \right) : \left( \prod_{i,j \neq 1} t_{ij} \right)^\infty \right).$$ Observe that, if $\delta = \{t_{ij} \mid i=2, \ldots, a,\ j = 1,\ldots, b\},$ $Q^*$ is also the ca\-no\-nical $\delta-$cell\-ular of $I_{\mathcal{B}_{\rm cor}}$ (see Corollary \ref{Cor BCCD1}). In fact, we have shown that the canonical cellular decomposition of the ideal of corner minors agrees with its canonical (primary) decomposition.
\end{example}

\end{document}